\newtheorem{theorem}{Theorem}[section]
\newtheorem{lemma}[theorem]{Lemma}
\theoremstyle{definition}
\newtheorem{definition}[theorem]{Definition}
\newtheorem{proposition}[theorem]{Proposition}
\theoremstyle{remark}
\numberwithin{equation}{section}
\begin{document}

\title{Equations for the third secant variety of the Segre product of $n$ projective spaces}

%    Information for first author
\author{Yang Qi}
%    Address of record for the research reported here
\address{Department of Mathematics, Texas A\&M University, College Station, Texas 77840}
%    Current address
%\curraddr{Department of Mathematics and Statistics, Case Western Reserve University, Cleveland, Ohio 43403}
\email{yangqi@math.tamu.edu}
%    \thanks will become a 1st page footnote.
%\thanks{The first author was supported in part by NSF Grant \#000000.}

%    Information for second author

%    General info
%\subjclass{Primary 54C40, 14E20; Secondary 46E25, 20C20}

\date{}

%\dedicatory{This paper is dedicated to our authors.}

%\keywords{Differential geometry, algebraic geometry}

\begin{abstract}
We determine set theoretic defining equations for the third secant variety of the Segre product of $n$ projective spaces, and from the proof of the main statement we derive an upper bound for the degrees of these equations.
\end{abstract}

\maketitle

\section{Introduction}

The study of tensor ranks and border ranks plays an important role in complexity theory, algebraic statistics, biology, signal processing, and many other areas (\cite{LL, C, CJ, H, LC, ML}). Due to the geometric interpretations of rank and border rank, it is important to find the equations for the secant varieties of Segre varieties since they produce tests for the border rank of a tensor.

\subsection{Strassen's equations}

For a projective variety $X\subset\mathbb{P}W$, the $r$-th secant variety $\sigma_r(X)$ is defined by

\begin{equation*}
\sigma_r(X)=\overline{\displaystyle\bigcup_{x_1,\dots,x_r\in X} <x_1,\dots,x_r>}\subset\mathbb{P}W
\end{equation*}
where $<x_1,\dots,x_r>\subset \mathbb{P}W$ denotes the linear span of the points $x_1,\dots,x_r$.

Let $A_1, \dots, A_n$ be finite dimensional complex vector spaces, define the Segre variety $Seg(\mathbb{P}A_1\times\cdots\times\mathbb{P}A_n)$ to be the image of the map

\[
Seg: \mathbb{P}A_1 \times \cdots \times \mathbb{P}A_n \rightarrow \mathbb{P}(A_1 \otimes \cdots \otimes A_n)
\]
given by
\[
([v_1], \cdots, [v_n]) \mapsto [v_1 \otimes \cdots \otimes v_n].
\]
For $T\in A_1\otimes\cdots\otimes A_n$, in geometric language, $T$ is said to have rank $one$ if $[T]\in Seg(\mathbb{P}A_1\times\cdots\times\mathbb{P}A_n)$, and $T$ has border rank $\leq r$ if $[T]\in \sigma_r(Seg(\mathbb{P}A_1\times\cdots\times\mathbb{P}A_n))$. 
%In this paper we determine the set theoretic defining equations for $\sigma_3(Seg(\mathbb{P}A_1\times\cdots\times\mathbb{P}A_n))$.

\begin{definition}
Given $W=A_1\otimes\cdots\otimes A_n$, a flattening of $W$ is a decomposition $W=(A_{i_1}\otimes\cdots\otimes A_{i_q})\otimes(A_{j_1}\otimes\cdots\otimes A_{j_{n-q}})=:A_I\otimes A_J$, where $I\cup J=\{1,\dots,n\}$ is a partition.
\end{definition}

Since $Seg(\mathbb{P}A_1\times\cdots\times\mathbb{P}A_n)\subset Seg(\mathbb{P}A_I\times\mathbb{P}A_J)$, $\sigma_r(Seg(\mathbb{P}A_1\times\cdots\times\mathbb{P}A_n))\subset\sigma_r(Seg(\mathbb{P}A_I\times\mathbb{P}A_J))$, the $(r+1)\times(r+1)$ minors of flattenings, i.e. $\wedge^{r+1}A^*_I\otimes\wedge^{r+1}A^*_J$, give equations for $\sigma_r(Seg(\mathbb{P}A_1\times\cdots\times\mathbb{P}A_n))$. For $\sigma_3(Seg(\mathbb{P}A\times\mathbb{P}B\times\mathbb{P}C))$, V. Strassen \cite{S} discovered equations beyond $4\times 4$ minors of flattenings. Here we present a version of Strassen's equations due to G. Ottaviani.

Given $T\in A\otimes B\otimes C$, i.e. $T: B^*\rightarrow A\otimes C$, $Id_A\otimes T$ gives a linear map $A\otimes B^*\rightarrow A\otimes A\otimes C$, by composing $Id_A\otimes T$ with the projection $\pi: A \otimes A\rightarrow \wedge^2A$ we obtain a map $T^{\wedge}_{BA}: A\otimes B^*\rightarrow \wedge^2A\otimes C$.

\begin{theorem}[\cite{O}]
Let $T\in A\otimes B\otimes C$, and assume $3\leq\dim A\leq \dim B\leq \dim C$. If $[T]\in\sigma_r(Seg(\mathbb{P}A\times\mathbb{P}B\times\mathbb{P}C))$, then $\operatorname{rank}(T^{\wedge}_{BA})\leq r(\dim A-1)$. Thus $(r(\dim A-1)+1)\times(r(\dim A-1)+1)$ minors of $T^{\wedge}_{BA}$ furnish equations for $\sigma_r(Seg(\mathbb{P}A\times\mathbb{P}B\times\mathbb{P}C))$, which are called Strassen's equations.
\end{theorem}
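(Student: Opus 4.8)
The plan is to verify the rank bound first on rank-one tensors by an explicit computation, then to propagate it to all tensors of rank at most $r$ using linearity of the construction, and finally to extend it to the whole secant variety by a closure (semicontinuity) argument. The essential observation making this work is that the assignment $\Phi \colon T \mapsto T^{\wedge}_{BA}$ is \emph{linear} in $T$: each of the three steps defining it---reshaping $T$ into the map $B^* \to A \otimes C$, tensoring with $Id_A$, and post-composing with the fixed projection $\pi \otimes Id_C \colon A \otimes A \otimes C \to \wedge^2 A \otimes C$---depends linearly on $T$.

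First I would compute $\operatorname{rank}(T^{\wedge}_{BA})$ for a rank-one tensor $T = a \otimes b \otimes c$. Tracing through the definition, an element $a' \otimes \beta \in A \otimes B^*$ is sent to $\beta(b)\,(a' \wedge a) \otimes c$, so the image of $T^{\wedge}_{BA}$ equals $(A \wedge a) \otimes c$. Since the map $a' \mapsto a' \wedge a$ has kernel exactly the line spanned by $a$, this image has dimension $\dim A - 1$, whence $\operatorname{rank}(T^{\wedge}_{BA}) = \dim A - 1$ for a rank-one tensor.

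Next, for any $T = \sum_{i=1}^{r} a_i \otimes b_i \otimes c_i$ of rank at most $r$, linearity gives $T^{\wedge}_{BA} = \sum_{i=1}^{r} (a_i \otimes b_i \otimes c_i)^{\wedge}_{BA}$, and subadditivity of the rank of linear maps yields $\operatorname{rank}(T^{\wedge}_{BA}) \le r(\dim A - 1)$. Finally I would pass to the closure: the condition $\operatorname{rank}(T^{\wedge}_{BA}) \le r(\dim A - 1)$ is Zariski-closed on $\mathbb{P}(A \otimes B \otimes C)$, being cut out by the $(r(\dim A - 1)+1)\times(r(\dim A - 1)+1)$ minors of $\Phi(T)$, whose entries are linear forms in $T$. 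Because $\sigma_r$ is by definition the Zariski closure of the locus of tensors of rank at most $r$, on which the inequality already holds, the inequality holds on all of $\sigma_r$, and these minors are equations for it.

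The computation and the linearity are routine; the conceptual crux is the last step, where one upgrades a statement about honest rank (finite sums of rank-one tensors) to one about border rank (points of the closure). This works precisely because the rank bound is a closed condition and $\Phi$ is a fixed linear map, so no separate argument about limits of rank decompositions is needed---closedness does all the work. I do not expect a genuine obstacle here; the only point requiring care is confirming that $\Phi$ is linear, since that is what allows the rank of $T^{\wedge}_{BA}$ to be controlled term-by-term on a rank decomposition.
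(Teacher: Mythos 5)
Your proof is correct and complete: the computation of the image of $(a\otimes b\otimes c)^{\wedge}_{BA}$ as $(A\wedge a)\otimes c$, the subadditivity of rank over a rank decomposition, and the passage to $\sigma_r$ via Zariski-closedness of the rank condition are exactly the standard argument for this result. The paper does not prove the theorem itself but cites it to Ottaviani, so there is nothing internal to compare against; your argument is the expected one.
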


%\section{3-factor case}

We have

\begin{theorem}[\cite{LM, F}]
\label{thm:str}
$\sigma_3(Seg(\mathbb{P}A\times\mathbb{P}B\times\mathbb{P}C))$ is the zero set of the size $4$ minors of flattenings and Strassen's equations.
\end{theorem}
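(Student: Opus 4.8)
The plan is to establish the nontrivial inclusion, since the reverse one is immediate. Let $Z$ denote the common zero locus of the size $4$ minors of the flattenings together with Strassen's equations. That $\sigma_3:=\sigma_3(Seg(\mathbb{P}A\times\mathbb{P}B\times\mathbb{P}C))\subseteq Z$ follows at once: border rank $\leq 3$ forces every flattening to have rank $\leq 3$, so the size $4$ minors vanish, while Strassen's equations vanish on $\sigma_3$ by Ottaviani's theorem above. So the work is to show $Z\subseteq\sigma_3$. First I would shrink the three factors. If $[T]\in Z$, then each flattening $T\colon A^{*}\to B\otimes C$ (and its two analogues) has rank $\leq 3$; taking $A'\subseteq A$, $B'\subseteq B$, $C'\subseteq C$ to be the images of the transposed flattening maps gives $\dim A',\dim B',\dim C'\leq 3$ and $T\in A'\otimes B'\otimes C'$. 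Because $\sigma_3(Seg(\mathbb{P}A'\times\mathbb{P}B'\times\mathbb{P}C'))\subseteq\sigma_3$ and Strassen's equations restrict compatibly, it suffices to treat $\dim A',\dim B',\dim C'\leq 3$. If two factors have dimension $\leq 2$ (or one has dimension $\leq 1$) then $\sigma_3$ already fills the ambient space and there is nothing to prove, so the essential case is $\dim A'=\dim B'=\dim C'=3$, where all three flattenings are injective and the size $4$ minors carry no information; only Strassen's equations remain.

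Next I would dispose of the generic case. Write $T=\sum_{i=1}^{3}a_i\otimes M_i$ with $M_i\in B'\otimes C'\cong\mathrm{Mat}_{3\times 3}$, and suppose the net $\langle M_1,M_2,M_3\rangle$ contains an invertible matrix; acting by $GL(B')\times GL(C')$ I may take $M_1=\mathrm{Id}$. In the basis $a_2\wedge a_3,\,a_3\wedge a_1,\,a_1\wedge a_2$ of $\wedge^2 A$ the Strassen map is the skew block matrix
\begin{equation*}
T^{\wedge}_{BA}=\begin{pmatrix}0 & M_3 & -M_2\\ -M_3 & 0 & M_1\\ M_2 & -M_1 & 0\end{pmatrix},
\end{equation*}
and a direct block reduction using the invertible blocks $M_1=\mathrm{Id}$ shows $\operatorname{rank}(T^{\wedge}_{BA})=6+\operatorname{rank}[M_2,M_3]$. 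Hence Strassen's equations hold precisely when $M_2$ and $M_3$ commute. Commuting matrices lie in the closure of the simultaneously diagonalizable pairs (irreducibility of the commuting variety), and a simultaneously diagonalizable triple $\mathrm{Id},M_2,M_3$ produces a tensor of rank $\leq 3$; passing to the limit gives $[T]\in\sigma_3$.

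The main obstacle is the degenerate case, in which the net $\langle M_1,M_2,M_3\rangle$ lies entirely in the hypersurface $\{\det=0\}$, so no normalization $M_1=\mathrm{Id}$ and hence no commutator reduction is available. Here I would exploit the injectivity of the $B'$- and $C'$-flattenings from the reduction step: it forces the net to have no common kernel vector and no common cokernel vector, so it is a nondegenerate net of singular $3\times 3$ matrices. I would then invoke the classification of such nets (equivalently, the orbit classification of the corresponding tensors under $GL(A')\times GL(B')\times GL(C')$), which yields a short explicit list of normal forms. For each normal form one tests Strassen's equations formed with respect to all three factors, and checks that precisely those passing the test lie in $\sigma_3$ — typically as boundary points, i.e.\ limits of rank-$3$ tensors such as tangential or more degenerate points of the secant variety. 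The crux of the whole argument is controlling this finite but delicate case analysis and confirming that no spurious component of $Z$ is concealed inside the determinant hypersurface.
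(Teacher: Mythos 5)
First, a point of reference: the paper does not prove Theorem~\ref{thm:str} at all; it is imported as a black box from Landsberg--Manivel and Friedland, so there is no internal proof to compare you against. Judged on its own terms, your reduction to $\dim A',\dim B',\dim C'\leq 3$ and your treatment of the generic case are correct and are essentially Strassen's classical argument: the identity $\operatorname{rank}(T^{\wedge}_{BA})=6+\operatorname{rank}[M_2,M_3]$ is right (take the Schur complement of the invertible block $\bigl(\begin{smallmatrix}0&I\\-I&0\end{smallmatrix}\bigr)$), and irreducibility of the commuting variety, with simultaneously diagonalizable pairs dense, does place $T$ in $\sigma_3$. Two smaller slips in the reduction: your ``nothing to prove'' clause covers only the case of two factors of dimension $\leq 2$, whereas the reduction can also land you in $\mathbb{C}^3\otimes\mathbb{C}^3\otimes\mathbb{C}^2$; that case is indeed trivial ($\sigma_3(Seg(\mathbb{P}^2\times\mathbb{P}^2\times\mathbb{P}^1))$ fills $\mathbb{P}^{17}$, e.g.\ by Kronecker normal form of pencils), but it is not covered by the criterion you state. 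Also, ``Strassen's equations'' here should be read as the equations for all three choices of distinguished factor, which only matters in the degenerate case.

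The genuine gap is the one you name yourself: the degenerate case, where $T(A'^{*})$ is a net of singular matrices, is left as a plan rather than a proof, and this is precisely the content that Friedland and Landsberg--Manivel had to add beyond Strassen --- it is where a spurious component of $Z$ could actually hide. To close it you would need (i) an actual classification of $3$-dimensional spaces of singular $3\times 3$ matrices with no common kernel or cokernel (Eisenbud--Harris/Atkinson: essentially the skew-symmetric net and subspaces of compression spaces), and (ii) for each normal form an explicit computation of all three Strassen maps, together with either an explicit border-rank-$3$ degeneration or a nonvanishing $7\times 7$ minor certifying exclusion. For instance, for the net $\Lambda^2\mathbb{C}^3\subset B'\otimes C'$ one must actually compute $\operatorname{rank}(T^{\wedge}_{BA})$ and decide which side of the fence the tensor falls on; nothing in your write-up does this. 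As it stands, the proposal proves the theorem away from the determinant hypersurface and correctly locates, but does not fill, the remaining hole.
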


When $\dim A=\dim B=\dim C=3$, we have $4\times 4$ minors of flattenings and degree $7$ Strassen's equations which define a zero set equivalent to Strassen's original degree $4$ equations \cite{LL}.

\subsection{Main results}

Given $T\in A_1\otimes\cdots\otimes A_n$, and any partition $I\cup J\cup K=\{1,\dots,n\}$, where $I, J, K$ disjoint, if $T\in\sigma_3(Seg(\mathbb{P}A_1\times\cdots\times\mathbb{P}A_n))$, then $T\in\sigma_3(Seg(\mathbb{P}A_I\times\mathbb{P}A_J\times\mathbb{P}A_K))$, so $T$ satisfies Strassen's equations for the partition $I\cup J\cup K=\{1,\dots,n\}$ and $4\times 4$ minors of all flattenings. Our main result is:

\begin{theorem}
The third secant variety of the Segre product of $n$ projective spaces $\sigma_3(Seg(\mathbb{P}A_1\times\cdots\times\mathbb{P}A_n))$ is set theoretically defined by Strassen's equations for all partitions $I\cup J\cup K=\{1,\dots,n\}$ and all $4\times 4$ minors of flattenings. More precisely, when $\dim A_i \leq 3$ for each $i$, $\sigma_3(Seg(\mathbb{P}A_1\times\cdots\times\mathbb{P}A_n))$ is set theoretically defined by Strassen's equations of degree $4$ for the partitions $\{i\}\cup \{j\}\cup \{1,\dots,\widehat{i},\cdots,\widehat{j},\cdots,n\}$ and all $4\times 4$ minors of flattenings.
\end{theorem}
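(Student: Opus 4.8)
The plan is to establish both inclusions, the substantive one being that any $T$ annihilated by all the listed equations has border rank at most $3$. The reverse is routine: if $[T]\in\sigma_3(Seg(\mathbb{P}A_1\times\cdots\times\mathbb{P}A_n))$ then for every tripartition $I\cup J\cup K=\{1,\dots,n\}$ the point $[T]$ lies in $\sigma_3(Seg(\mathbb{P}A_I\times\mathbb{P}A_J\times\mathbb{P}A_K))$, because the $n$-factor Segre sits inside the three-factor one; Ottaviani's theorem together with Theorem \ref{thm:str} then shows the corresponding Strassen equations and all $4\times4$ flattening minors vanish at $T$. The rest of the work is devoted to the converse.

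First I would normalize the dimensions. The $4\times4$ minors of the single-factor flattening $A_i^*\to\bigotimes_{j\ne i}A_j$ force its rank to be at most $3$; replacing each $A_i$ by the image of the transposed flattening, $T$ lives in $A_1'\otimes\cdots\otimes A_n'$ with $\dim A_i'\le 3$, and border rank, membership in $\sigma_3$, and the vanishing of all the equations are unchanged. Moreover, if some flattening has rank $1$ then $T=a_i\otimes T'$ peels off a factor, so I may assume each $\dim A_i'\in\{2,3\}$. This produces exactly the normalized situation of the precise statement and sets up an induction on $n$.

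The induction is on $n$; the base case $n=3$ is Theorem \ref{thm:str} (with $n\le 2$ trivial, $\sigma_3$ being the matrices of rank $\le3$, cut out by $4\times4$ minors). For the step I would group the last two factors and regard $T$ as a tensor in $A_1\otimes\cdots\otimes A_{n-2}\otimes(A_{n-1}\otimes A_n)$. Every tripartition of this $(n-1)$-fold tensor, and every flattening of it, comes from one of $\{1,\dots,n\}$ in which $n-1$ and $n$ stay together, so the equations assumed on $T$ already contain those needed to apply the inductive hypothesis to the grouped tensor. This gives $[T]\in\sigma_3$ of the grouped Segre, i.e. $T=\lim_{\epsilon\to0}\sum_{k=1}^3\bigl(\bigotimes_{j\le n-2}v_j^{(k)}(\epsilon)\bigr)\otimes M^{(k)}(\epsilon)$ with each $M^{(k)}(\epsilon)\in A_{n-1}\otimes A_n$.

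The crux, and the heart of the proof, is to upgrade this coarse decomposition to a genuine one in all $n$ factors: since coarsening can only lower border rank, I must show the matrices $M^{(k)}$ can be chosen of rank one in the limit. Here I would feed in the equations that the grouping threw away, namely Strassen's system for the tripartition $\{n-1\}\cup\{n\}\cup\{1,\dots,n-2\}$ and the $4\times4$ minors of the flattening isolating $\{n-1,n\}$, which bound $\dim\operatorname{span}\{M^{(k)}\}$ and force border rank $\le3$ in the split that keeps $A_{n-1}$ and $A_n$ apart. I expect the delicate point to be the ensuing case analysis on the flattening ranks and on $\dim\operatorname{span}\{M^{(k)}\}$, in which the degenerate border-rank-$3$ normal forms (tangential and $W$-state type limits) from the three-factor classification must be tracked and each shown to refine so that every $M^{(k)}$ becomes rank one. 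Finally, to obtain the sharp form of the statement I would show separately that, once every $\dim A_i\le3$, a Strassen system for a tripartition with a non-singleton block is a set-theoretic consequence of the two-singleton degree-$4$ systems together with the flattening minors; this both trims the equations to those claimed and, by tracking the sizes of the minors involved, yields the degree bound announced in the abstract.
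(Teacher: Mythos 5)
Your outline matches the paper's strategy in its broad strokes---reduce to $\dim A_i\le 3$ via flattenings and inheritance, group factors so that the three-factor result (Theorem~\ref{thm:str}) applies, then refine the coarse border-rank-$3$ decomposition using the remaining equations---but it stops exactly where the proof begins. The step you flag as ``the delicate point'' (forcing each $M^{(k)}$ to be rank one) is the entire content of the argument: the paper runs it through the Buczynski--Landsberg normal forms (Proposition~\ref{prop:norm} and Theorem~\ref{thm:normal}), one type at a time, splitting off one factor per induction step, and almost the whole paper is that case analysis. Leaving it as an expectation is not a proof; moreover, working with a limiting family $M^{(k)}(\epsilon)$ is not a workable substitute for the normal-form classification, since a border-rank-$3$ tensor need not be presented as a limit of three separately converging rank-one summands, and you need the classification even to enumerate the cases.

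More seriously, there is one regime in which the refinement you propose does not close and which requires an idea absent from your plan. When $\dim T(A_j^*)\le 2$ for every $j$ (the paper's Case 3), the generic normal form is $T=(a^1_1\otimes a^2_1+a^1_2\otimes a^2_2)\otimes b_1+a^1_1\otimes a^2_2\otimes b_2+a^1_2\otimes a^2_1\otimes b_3$, and when $T$ has this form for every choice of the pair of distinguished factors, the paper does not (and apparently cannot) resolve it by peeling off factors: instead it observes that $a^1_1\otimes a^2_2+a^1_2\otimes a^2_1$ is a full-rank element of $\ker T^{\wedge}_{A_2A_1}$, uses it to symmetrize $T$ into $S^nA_1$ with $\dim A_1=2$, and decides membership via the known equations for $\sigma_3(\nu_n(\mathbb{P}^1))$ (Theorem~\ref{thm:veronese}), which follow from the $4\times4$ flattening minors. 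Without this symmetrization step your induction has an obstruction it cannot remove. Finally, your last reduction (deriving the non-singleton-block Strassen systems from the two-singleton ones) is unnecessary: the paper simply never uses any tripartition other than the two-singleton ones, which is the cleaner route to the degree-$4$ statement.
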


As an example of the result of Draisma and Kuttler \cite{DK}, this theorem gives an explicit uniform upper bound of degrees of set theoretic defining equations for $\sigma_3 (Seg(\mathbb{P}A_1\times\cdots\times\mathbb{P}A_n))$.

\subsection{Normal forms of points in $\sigma_3(Seg(\mathbb{P}A_1\times\cdots\times\mathbb{P}A_n))$}

Here we recall results of Buczynski and Landsberg that classify all normal forms for tensors of border rank $\leq 3$.

\begin{proposition}[\cite{BL}]
\label{prop:norm}
Let $X$ denote $Seg(\mathbb{P}A_1\times\cdots\times\mathbb{P}A_n)$ and $p=[v]\in\sigma_2(X)$, then $v$ has one of the following normal forms:

$1$, $p\in X$;

$2$, $v=x+y$ with $[x], [y]\in X$;

$3$, $v=x'$ with $x'\in\widehat{T}_{[x]}X$.
\end{proposition}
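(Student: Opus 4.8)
The plan is to realize $\sigma_2(X)$ as the closure of the honest secant locus and then to classify exactly which limit points can occur on its boundary. Write $\widehat{X}\subset A_1\otimes\cdots\otimes A_n$ for the affine cone over $X=Seg(\mathbb{P}A_1\times\cdots\times\mathbb{P}A_n)$ and set $S^0=\{[x+y]:[x],[y]\in X\}$, the constructible union of secant lines together with $X$ itself, so that $\sigma_2(X)=\overline{S^0}$. Every point of $S^0$ already has one of the first two forms: it is of type $1$ when a representative has rank one, and of type $2$ when $v=x+y$ with $[x]\neq[y]$ both rank one. It therefore suffices to treat a boundary point $p=[v]\in\sigma_2(X)\setminus S^0$ and show it must have form $3$.

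For such a $p$ I would invoke the curve selection lemma to produce an algebraic arc $t\mapsto[x_t+y_t]$ inside $S^0$, defined for $t\in(0,\epsilon)$, with $[x_t],[y_t]\in X$ and $[x_t+y_t]\to p$ as $t\to 0$. Since $X$ is projective, hence compact, after refining the arc I may assume $[x_t]\to[x]$ and $[y_t]\to[y]$ for some $[x],[y]\in X$. If $[x]\neq[y]$, then the secant lines $\langle x_t,y_t\rangle$ converge to the genuine secant line $\langle x,y\rangle$, so $p\in\langle x,y\rangle\subset S^0$, contradicting the choice of $p$. The essential, genuinely new case is thus $[x]=[y]$, where the two moving points collide to a common point $[x]\in X$ and $p$ lies in the tangent star $T^{\star}_{[x]}X$.

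The only real obstacle is to verify that this collision limit lands in $\mathbb{P}\widehat{T}_{[x]}X$, i.e.\ that $v\in\widehat{T}_{[x]}X$. Here I would exploit that the Segre variety is smooth, so that its tangent star at any point coincides with the embedded tangent space; equivalently, one can make the collision fully explicit. Choosing representatives $x_t=v_1(t)\otimes\cdots\otimes v_n(t)$ and $y_t=w_1(t)\otimes\cdots\otimes w_n(t)$ with $v_i(0)=w_i(0)=v_i$, a Taylor expansion in $t$ shows that a finite nonzero limit of $a_t x_t+b_t y_t$ can only occur when $a_t,b_t$ scale like $t^{-1}$ with $a_t+b_t$ bounded; the higher-order terms are killed since $t^2\cdot t^{-1}\to 0$, and the surviving limit has the shape $\lambda\,v_1\otimes\cdots\otimes v_n+\sum_{i=1}^n v_1\otimes\cdots\otimes u_i\otimes\cdots\otimes v_n$ for suitable $u_i\in A_i$. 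This is precisely a vector of $\widehat{T}_{[x]}X$, so $p$ has form $3$, exhausting the possibilities. The argument is clean exactly because we only deal with $\sigma_2$: the secant limits are governed by first-order (tangential) data at a single smooth point, with no need for the higher osculating or cactus phenomena that complicate the analysis of $\sigma_r$ for larger $r$.
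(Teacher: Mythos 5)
First, a point of comparison: the paper does not prove this proposition at all --- it is quoted verbatim from Buczy\'{n}ski--Landsberg \cite{BL} --- so there is no in-paper argument to measure yours against. Judged on its own, your outline is the standard and correct route: realize $\sigma_2(X)$ as the closure of the union of honest secant lines, reduce to boundary points via an arc (or, avoiding curve selection entirely, via the incidence correspondence $\overline{\{(x,y,z):z\in\langle x,y\rangle\}}\subset X\times X\times\mathbb{P}W$ and properness of its projection), split according to whether the two marked points collide, and in the collision case use smoothness of the Segre to place the limit in the embedded tangent space.

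The one step that should not stand as written is the explicit Taylor computation. The claim that a finite nonzero limit of $a_tx_t+b_ty_t$ ``can only occur when $a_t,b_t$ scale like $t^{-1}$ with $a_t+b_t$ bounded'' is unjustified and false in general: if the two arcs agree to first order, i.e.\ $x'(0)=y'(0)$, one may take $a_t\sim t^{-2}$, $b_t\sim -t^{-2}$ and obtain the limit $\tfrac12\bigl(x''(0)-y''(0)\bigr)$, which a priori involves second-order (osculating) data rather than tangential data; still higher orders of tangency produce still higher derivatives. For the Segre one can check that such differences of derivatives of arcs with matching lower-order data do land in $\widehat{T}_{[x]}X$, but that is an additional computation, not a consequence of ``the higher-order terms are killed.'' The clean repair is the other argument you gesture at: the limit point lies on the limit, taken in the Grassmannian of lines, of the secants $\langle x_t,y_t\rangle$, hence in the tangent star $T^{\star}_{[x]}X$, and for any variety the tangent star is contained in the embedded Zariski tangent space, which at a smooth point is $\mathbb{P}\widehat{T}_{[x]}X$. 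If you rely on that containment, state it as the key external input (it has its own short proof with ideal generators); if you want the collision analysis to be self-contained, you must handle arbitrary orders of tangency between the two arcs, which is essentially the content of the analysis in \cite{BL}.
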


\begin{theorem}[\cite{BL}]
\label{thm:normal0}
Let $X$ denote $Seg(\mathbb{P}A_1\times\cdots\times\mathbb{P}A_n)$ and $p=[v]\in\sigma_3(X)\setminus\sigma_2(X)$, then $v$ has one of the following normal forms:

$1$. $v=x+y+z$ with $[x], [y], [z]\in X$;

$2$. $v=x+x'+y$ with $[x], [y]\in X$ and $x'\in\widehat{T}_{[x]}X$;

$3$. $v=x+x'+x''$, where $[x(t)]\subset X$ is a curve and $x'=x'(0)$, $x''=x''(0)$;

$4$. $v=x'+y'$, where $[x], [y]\in X$ are distinct points that lie on a line contained in $X$, $x'\in\widehat{T}_{[x]}X$, and $y'\in\widehat{T}_{[y]}X$.
\end{theorem}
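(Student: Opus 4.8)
The statement has two directions. The easy one is realizability: each of the four forms gives a point of $\sigma_3(X)$. Forms $1$--$3$ are spans of smoothable length-three subschemes of $X$ --- three reduced points, a reduced point together with a length-two (tangent) scheme, and a curvilinear length-three scheme --- so they are manifestly limits of sums of three rank-one tensors. Form $4$ is the only one not of this shape, and its membership in $\sigma_3$ is exactly where the lines on the Segre variety enter: if $[x],[y]$ lie on a line $\ell\subset X$ they differ in a single factor, say $x=v_1\otimes\cdots\otimes v_n$ and $y=v_1\otimes\cdots\otimes u_i\otimes\cdots\otimes v_n$, so $x+y$ is again rank one. Choosing curves $x(t),y(t)\subset\widehat{X}$ with $x(0)=x$, $y(0)=y$, $x'(0)=x'$, $y'(0)=y'$, one has $x'+y'=\lim_{t\to 0}\tfrac1t\bigl(x(t)+y(t)-(x+y)\bigr)$, a limit of rank $\le 3$ tensors precisely because $x+y$ is rank one. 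Thus all four forms occur.

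The substance is the converse. I would write $[v]=\lim_{t\to 0}[v(t)]$ with $v(t)=x_1(t)+x_2(t)+x_3(t)$, $[x_i(t)]\in X$ smooth curves, after absorbing the (possibly unbounded) scalar coefficients so that $v(t)\to v\neq 0$, and organize the analysis by the collision pattern of the three points as $t\to0$. First I would cut down the number of factors: for a border-rank-three tensor each single-factor flattening, regarded as a map from $A_i^*$ to the tensor product of the remaining factors, has rank at most $3$, so we may assume each $A_i$ is spanned by the relevant directions, and any factor in which all three limiting directions agree may be split off. This is what makes $\dim A_i\le 3$ suffice and reduces to a few active factors. Throughout I would use the explicit affine tangent space $\widehat{T}_{[v_1\otimes\cdots\otimes v_n]}X=\sum_{i} v_1\otimes\cdots\otimes A_i\otimes\cdots\otimes v_n$ and its second osculating space.

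The case analysis then runs as follows. If the three limit points $[x_i(0)]$ are distinct and not collinear in $X$, we get form $1$. If exactly two points collide, rescaling the difference of the colliding curves extracts a tangent vector while the third term survives, giving $x+x'+y$, i.e. form $2$. If all three collide to a single $[x]$, the leading, first- and second-order Taylor data of a single curve give $x+x'+x''$, i.e. form $3$; here the only alternative length-three scheme at one point is the non-curvilinear first-order neighborhood, but its span is $\mathbb{P}\widehat{T}_{[x]}X$, whose general point is a single tangent vector, hence lies in the tangential variety $\tau(X)\subseteq\sigma_2(X)$ and is excluded. The genuinely new behavior is the unbounded-coefficient limit: form $4$ appears exactly when the three projective points degenerate onto a common line $\ell\subset X$ (as in $[x],[y],[x+y]\in\ell$) while the coefficients blow up, the rank-one collapse $x+y$ converting what looks like a length-four object into a border-rank-three point $x'+y'$.

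The main obstacle is completeness together with the correct, and only, appearance of form $4$. One must show that no other limiting configuration arises, and in particular that whenever the naive limit demands two independent tangent directions, the border-rank-three hypothesis forces their base points to be collinear in $X$, producing precisely form $4$ and nothing more general. Concretely the difficulty is controlling the simultaneous limits in which coefficients tend to infinity: tracking which combinations of the $x_i(t)$ and their derivatives survive, and proving that any surviving second tangent direction is anchored at a point lying on a line of $X$ through the first. I would attack this by ordering the Taylor data and repeatedly invoking Proposition \ref{prop:norm} on the partial sums $x_i(t)+x_j(t)$, whose limits are already classified as a point of $X$, a sum $x+y$, or a single tangent vector, and bootstrapping this two-point description into the three-point one.
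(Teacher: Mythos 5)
The paper does not actually prove this statement: Theorem~\ref{thm:normal0} is quoted from Buczy\'{n}ski--Landsberg \cite{BL} and used as a black box, so there is no internal proof to compare you against. Judged on its own terms, your proposal gets the realizability direction essentially right --- in particular the observation that when $[x],[y]$ lie on a line of the Segre the sum $x+y$ is again rank one, so that $x'+y'=\lim_{t\to 0}\tfrac{1}{t}\bigl(x(t)+y(t)-(x+y)\bigr)$ is a limit of rank $\le 3$ tensors, is exactly the right point --- and it correctly locates where the difficulty of the converse lies.

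However, the converse as written has a genuine gap at precisely the step you label ``the main obstacle.'' Classifying by the collision pattern of $[x_1(t)],[x_2(t)],[x_3(t)]$ only settles the cases where the limit of the moving $2$-plane equals the span of the limiting scheme; the entire content of the theorem is the remaining case, where the coefficients are forced to blow up because the limiting configuration becomes linearly degenerate, and one must show that every surviving ``derivative of the span'' direction is either absorbed into forms $1$--$3$ or is anchored at two points joined by a line of $X$, giving form $4$ and nothing more general. Your plan --- order the Taylor data and bootstrap from Proposition~\ref{prop:norm} applied to the pairwise sums $x_i(t)+x_j(t)$ --- is plausible but remains a plan: Proposition~\ref{prop:norm} classifies the limit of each pair, not the limit of the plane spanned by the triple, and passing from pairwise limits to the triple is exactly where the bookkeeping of which linear combinations survive has to be done. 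Note also that your trichotomy (three distinct non-collinear points / two collide / three collide) silently omits the configuration of three points that stay distinct but become collinear in the limit; since the Segre is cut out by quadrics such a line lies on $X$, and this is one of the configurations that genuinely produces form $4$. In short, realizability and forms $1$--$3$ are handled, but the completeness claim --- that the wild limits yield form $4$ and nothing else --- is asserted rather than proved, and that claim is the theorem.
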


Normal forms for Theorem~\ref{thm:normal0} are as follows:

\begin{theorem}[\cite{BL}]
\label{thm:normal}
Let $X$ denote $Seg(\mathbb{P}A_1\times\cdots\times\mathbb{P}A_n)$ and $p=[v]\in\sigma_3(X)\setminus\sigma_2(X)$, then $v$ has one of the following normal forms:

$1$. $v = a^1_1\otimes \cdots \otimes a^n_1 + a^1_2\otimes \cdots \otimes a^n_2 + a^1_3\otimes \cdots \otimes a^n_3$;

$2$. $v = \displaystyle \sum_{i=1}^n a^1_1\otimes \cdots \otimes a^{i-1}_1 \otimes a^i_2 \otimes a^{i+1}_1 \otimes \cdots \otimes a^n_1 + a^1_3\otimes \cdots \otimes a^n_3$;

$3$. $v = \displaystyle \sum_{i<j} a^1_1\otimes \cdots \otimes a^{i-1}_1 \otimes a^i_2 \otimes a^{i+1}_1 \otimes \cdots \otimes a^{j-1}_1 \otimes a^j_2 \otimes a^{j+1}_1 \otimes \cdots \otimes a^n_1 + \sum_{i=1}^n a^1_1\otimes \cdots \otimes a^{i-1}_1 \otimes a^i_3 \otimes a^{i+1}_1 \otimes \cdots \otimes a^n_1$;

$4$. $v = \displaystyle \sum_{s=2}^n a^1_2\otimes a^2_1 \otimes \cdots \otimes a^{s-1}_1 \otimes a^s_2 \otimes a^{s+1}_1 \otimes \cdots \otimes a^n_1 + \sum_{i=1}^n a^1_1\otimes \cdots \otimes a^{i-1}_1 \otimes a^i_3 \otimes a^{i+1}_1 \otimes \cdots \otimes a^n_1$,

where $a^i_j\in A_i$, and the vectors need not all be linearly independent.
\end{theorem}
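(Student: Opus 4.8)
The plan is to derive the explicit coordinate normal forms of Theorem~\ref{thm:normal} from the geometric descriptions of Theorem~\ref{thm:normal0}, treating the four cases separately. Two structural facts drive every computation. First, the affine cone $\widehat{X}$ over the Segre variety consists exactly of the decomposable tensors $a^1\otimes\cdots\otimes a^n$, and its affine tangent space at a smooth point $x=a^1_1\otimes\cdots\otimes a^n_1$ is given by the Leibniz rule
\begin{equation*}
\widehat{T}_{[x]}X=\Big\{\sum_{i=1}^n a^1_1\otimes\cdots\otimes a^{i-1}_1\otimes b^i\otimes a^{i+1}_1\otimes\cdots\otimes a^n_1 \ :\ b^i\in A_i\Big\},
\end{equation*}
a space that contains $x$ itself (take each $b^i$ proportional to $a^i_1$). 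Second, the group $G=GL(A_1)\times\cdots\times GL(A_n)$ acts transitively on the nonzero points of $\widehat X$ and preserves each secant variety, so in every case I am free to change basis in each factor; combined with the freedom to reparametrize curves, this is what lets me reduce the osculating data to the stated shapes.

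Cases~1 and~2 are essentially immediate. In Case~1 I set $x=a^1_1\otimes\cdots\otimes a^n_1$, $y=a^1_2\otimes\cdots\otimes a^n_2$, $z=a^1_3\otimes\cdots\otimes a^n_3$, which is form~1 verbatim. In Case~2, since $x\in\widehat T_{[x]}X$ the sum $x+x'$ is again a general element of $\widehat{T}_{[x]}X$; expanding it with the tangent formula and using the $G$-action to absorb the component along the base point $x$ and to rename the transverse part of each $b^i$ as $a^i_2$ turns $x+x'$ into $\sum_{i=1}^n a^1_1\otimes\cdots\otimes a^i_2\otimes\cdots\otimes a^n_1$, after which the remaining rank-one tensor is renamed $a^1_3\otimes\cdots\otimes a^n_3$, giving form~2.

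For Case~3 I parametrize the curve as $x(t)=a^1(t)\otimes\cdots\otimes a^n(t)$ with $a^i(0)=a^i_1$ and differentiate: the first derivative $x'(0)$ is a sum of single bumps whose transverse directions I rename $a^i_2$, while the second derivative contributes the cross terms $\sum_{i<j}a^1_1\otimes\cdots\otimes\dot a^i(0)\otimes\cdots\otimes\dot a^j(0)\otimes\cdots\otimes a^n_1$ together with diagonal terms supported on single factors. The goal is to show that reparametrizing the curve and acting by $G$ annihilate every part of the first- and second-order data lying in $\langle x\rangle+\widehat T_{[x]}X$ except the double bumps $a^i_2\otimes a^j_2$, while the genuinely second-order directions become the single bumps $a^i_3$; this is precisely form~3. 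Case~4 begins from the fact that a line contained in $X$ varies in a single factor, so I may take $x=a^1_1\otimes a^2_1\otimes\cdots\otimes a^n_1$ and $y=a^1_2\otimes a^2_1\otimes\cdots\otimes a^n_1$; expanding $x'\in\widehat T_{[x]}X$ and $y'\in\widehat T_{[y]}X$ by the tangent formula and combining them, while using the $G$-action to eliminate redundant directions, should collapse $x'+y'$ onto form~4.

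The principal obstacle is the normalization in Cases~3 and~4: I must verify that the $G$-action together with curve reparametrization is exactly strong enough to reduce the osculating data to the stated shapes, neither losing generality nor leaving spurious terms. Concretely, for Case~3 the delicate bookkeeping is to show that every component of the second-order vector that is not a double bump can be absorbed into the base point and the first-order term by a suitable change of curve and basis, so that the surviving invariants are carried entirely by the vectors $a^i_1,a^i_2,a^i_3$; Case~4 requires the analogous, and somewhat tighter, reconciliation of two tangent vectors anchored at distinct points of one pencil. Finally, because the lists in both theorems must be exhaustive, I have to allow the chosen vectors to be linearly dependent and check that the degenerate configurations arising as limits are still captured by the same formulas rather than escaping to a different normal form; confirming this completeness, rather than carrying out the individual reductions, is what makes the argument genuinely delicate.
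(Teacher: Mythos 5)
This statement is imported into the paper verbatim from Buczy\'nski--Landsberg \cite{BL}; the paper itself contains no proof of it, so there is nothing internal to compare against. Your proposal --- deriving the coordinate forms from the geometric classification of Theorem~\ref{thm:normal0} --- is the right and essentially only route, and your treatment of Cases 1 and 2 is correct. My main comment is that the steps you single out as the ``principal obstacle'' in Cases 3 and 4 are in fact routine, precisely because of the final clause of the statement that the vectors $a^i_j$ need not be linearly independent: no normalization by $GL(A_1)\times\cdots\times GL(A_n)$ and no curve reparametrization are required, only collecting terms and naming vectors. Writing $x(t)=a^1(t)\otimes\cdots\otimes a^n(t)$ (a lift always exists, absorbing any scalar factor into one slot), the Leibniz rule gives $x''(0)=2\sum_{i<j}(\cdots\dot a^i(0)\cdots\dot a^j(0)\cdots)+\sum_i(\cdots\ddot a^i(0)\cdots)$, so $x+x'+x''$ equals the double-bump sum with $a^i_2=\sqrt{2}\,\dot a^i(0)$ plus a sum of single bumps, and the latter is exactly $\sum_i(\cdots a^i_3\cdots)$ with $a^1_3=a^1_1+\dot a^1(0)+\ddot a^1(0)$ and $a^i_3=\dot a^i(0)+\ddot a^i(0)$ for $i\ge 2$; nothing needs to be annihilated, only absorbed into the $a^i_3$. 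Likewise in Case 4, once you use that a line in $X$ varies in a single factor, $x'+y'$ is already form 4 on the nose after naming $a^s_2$ and $a^i_3$, with the two slot-one contributions combining into $a^1_3$. Your worry about degenerate limits escaping the list is also unfounded: exhaustiveness is inherited wholesale from Theorem~\ref{thm:normal0}, which you are taking as input. So the proposal is correct in substance, but the honest assessment is that the hard content lives entirely in Theorem~\ref{thm:normal0} (proved in \cite{BL}), and the passage to Theorem~\ref{thm:normal} is a short computation rather than a delicate normalization argument.
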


\subsection{Outline of the proof of the main result}

Given $T\in A_1\otimes\cdots\otimes A_n$, for each $A_i$ we fix a basis $\{a^i_j\}$ and its dual basis $\{\alpha^i_j\}$. Let $X_k:=Seg(\mathbb{P}A_1\times\cdots\times\mathbb{P}A_k\times\mathbb{P}(A_{k+1}\otimes\cdots\otimes A_n))$, and $X:=Seg(\mathbb{P}A_1\times\cdots\times\mathbb{P}A_n)$.

\begin{definition}[\cite{LW}]
Given $b_1,\dots,b_n\in \mathbb{Z}_{\geq 1}$, define the subspace variety 
\[
Sub_{b_1,\dots,b_n}(A_1\otimes \cdots\otimes A_n):= \mathbb{P}\{T\in A_1\otimes\cdots\otimes A_n|\dim T(A^*_j)\leq b_j, \forall 1\leq j\leq n\}.
\]
\end{definition}

\begin{proposition}[\cite{JLLM}]
\label{cor:inhe}
Let $\dim A_j\geq r$, $1\leq j\leq n$. The ideal of $\sigma_r(Seg(\mathbb{P}A_1\times \cdots \times \mathbb{P}A_n))$ is generated by the modules inherited from the ideal of $\sigma_r(Seg(\mathbb{P}^{r-1} \times \cdots \times \mathbb{P}^{r-1}))$ and the modules generating the ideal of $Sub_{r, \dots, r}(A_1\otimes \cdots \otimes A_n)$. The analogous scheme and set theoretic results hold as well.
\end{proposition}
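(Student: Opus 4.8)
The plan is to exploit the two special features of $\sigma_r(X)$, where $X = Seg(\mathbb{P}A_1\times\cdots\times\mathbb{P}A_n)$: that it is contained in the subspace variety $Sub_{r,\dots,r}$, and that it is a $G$-variety for $G = GL(A_1)\times\cdots\times GL(A_n)$. Write $V = A_1\otimes\cdots\otimes A_n$ and fix $r$-dimensional subspaces $A_j'\subseteq A_j$ with $V' = A_1'\otimes\cdots\otimes A_n'$. Since a generic point of $\sigma_r(X)$ is a sum of $r$ rank one tensors, whose factors in $A_j$ span a subspace of dimension at most $r$, one checks that $\sigma_r(X) = \overline{G\cdot\sigma_r(Seg(\mathbb{P}A_1'\times\cdots\times\mathbb{P}A_n'))}$, and that the affine cone over $Sub_{r,\dots,r}$ equals $\bigcup_{B_j} B_1\otimes\cdots\otimes B_n$, the union over all $r$-dimensional $B_j\subseteq A_j$. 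Both $I(\sigma_r(X))$ and $I(Sub_{r,\dots,r})$ are $G$-submodules of $Sym(V^*)$.

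By the Cauchy formula, $Sym^d(V^*)$ decomposes into a sum of irreducible $G$-modules of the form $S_{\pi_1}A_1^*\otimes\cdots\otimes S_{\pi_n}A_n^*$ with $|\pi_j| = d$, and such a module is nonzero exactly when each partition satisfies $\ell(\pi_j)\leq\dim A_j$. I would split $I(\sigma_r(X))$ according to whether every $\ell(\pi_j)\leq r$ (the short modules) or some $\ell(\pi_j)\geq r+1$ (the long modules). A long module restricts to zero on every fiber $B_1\otimes\cdots\otimes B_n$ because $S_{\pi_j}B_j^* = 0$ when $\ell(\pi_j)>\dim B_j = r$; hence it vanishes on $Sub_{r,\dots,r}$, while a short module does not. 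Therefore $I(Sub_{r,\dots,r})$ is exactly the span of the long modules, and since $\sigma_r(X)\subseteq Sub_{r,\dots,r}$ every long module already lies in $I(\sigma_r(X))$. By \cite{LW} this long part is generated as an ideal by the $(r+1)\times(r+1)$ minors of flattenings.

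The heart of the matter is the short part, where I would prove an inheritance bijection. Restriction of functions along $V'\hookrightarrow V$ gives a surjective $G'$-equivariant algebra homomorphism $\rho: Sym(V^*)\to Sym(V'^*)$, with $G' = \prod_j GL(A_j')$, carrying a short module $M = S_{\pi_1}A_1^*\otimes\cdots$ onto the irreducible module $M' = S_{\pi_1}(A_1')^*\otimes\cdots$. A polynomial $P\in M$ vanishes on $\sigma_r(Seg(\mathbb{P}A_1'\times\cdots))$ if and only if $\rho(P)$ does, so $M$ vanishes there iff $M'\subseteq I(\sigma_r(Seg(\mathbb{P}A_1'\times\cdots)))$. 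Now $M$ is irreducible, hence it lies in $I(\sigma_r(X))$ or meets it trivially. If it lies in $I(\sigma_r(X))$ it vanishes on the subvariety $\sigma_r(Seg(\mathbb{P}A_1'\times\cdots))$, so $M'\subseteq I(\sigma_r(Seg(\mathbb{P}A_1'\times\cdots)))$. Conversely, if $M$ vanishes on $\sigma_r(Seg(\mathbb{P}A_1'\times\cdots))$, then since $M$ is $G$-stable each $g\cdot P$ (for $g\in G$, $P\in M$) vanishes there too, so $P$ vanishes on $g^{-1}\cdot\sigma_r(Seg(\mathbb{P}A_1'\times\cdots))$; as $g$ varies these sweep out a dense subset of $\sigma_r(X)$, whence $P\in I(\sigma_r(X))$. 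Thus a short module lies in $I(\sigma_r(X))$ precisely when the corresponding module lies in $I(\sigma_r(Seg(\mathbb{P}^{r-1}\times\cdots\times\mathbb{P}^{r-1})))$, which is exactly the sense in which these equations are inherited.

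The main obstacle is upgrading this module-level matching to a statement about generators: I must show that the inherited generators of $I(\sigma_r(\mathbb{P}^{r-1}\times\cdots))$ together with the minors actually generate $I(\sigma_r(X))$ as an ideal, not merely that the graded modules correspond. The difficulty is that ideal multiplication can raise partition lengths and mix short and long modules, so generation cannot be read off from the degree-by-degree decomposition alone. I would resolve this by restricting to the open subset of $Sub_{r,\dots,r}$ where all flattenings have rank exactly $r$, which is a bundle over $\prod_j Gr(r,A_j)$ with fiber the cone over $\sigma_r(Seg(\mathbb{P}A_1'\times\cdots))$ inside $B_1\otimes\cdots\otimes B_n$; there the inherited equations cut out the fibers exactly and the minors control the complement, yielding the set theoretic and scheme theoretic statements fiberwise and then globally by $G$-equivariance.
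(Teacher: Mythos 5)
This proposition is quoted from \cite{JLLM}; the paper gives no proof of it, so I am measuring your attempt against the argument in \cite{JLLM, LW}. Your setup is the standard one and matches theirs: split the isotypic decomposition of $Sym^d(V^*)$ into ``short'' pieces (all $\ell(\pi_j)\leq r$) and ``long'' pieces, identify $I(Sub_{r,\dots,r})$ with the long part (this is the content of \cite{LW}, via the desingularization of the subspace variety), and match the short part of $I(\sigma_r(X))$ with $I(\sigma_r(Seg(\mathbb{P}A_1'\times\cdots\times\mathbb{P}A_n')))$ by the restriction/sweeping argument. Two small cautions there: for $n\geq 3$ the Cauchy decomposition has multiplicities (Kronecker coefficients), so you must argue with isotypic components or with an arbitrary irreducible submodule rather than ``the'' module $S_{\pi_1}A_1^*\otimes\cdots\otimes S_{\pi_n}A_n^*$; and your assertion that $\rho$ carries a short irreducible $M$ \emph{onto} $M'$ is not automatic --- it is rescued only by the $I(Sub)=$ long part computation, so the logical order matters. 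With those repairs, your argument does deliver the set-theoretic statement (which is all this paper actually uses): the minors force $T$ into some $B_1\otimes\cdots\otimes B_n$, and the inherited equations restrict to a set of equations cutting out $\sigma_r$ of the small Segre there.

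The genuine gap is in the ideal-theoretic (and scheme-theoretic) generation claim, which you correctly identify as the crux but do not close. Restricting to the open locus of $Sub_{r,\dots,r}$ where every flattening has rank exactly $r$ cannot prove that a list of polynomials generates $I(\sigma_r(X))$: generation is not detectable on a dense open subset (the failure, if any, lives over the boundary where the flattening ranks drop), and ``fiberwise plus $G$-equivariance'' only re-derives the set-theoretic containment you already have. The actual proof replaces the open locus by the full desingularization $p:\mathcal{R}_1\otimes\cdots\otimes\mathcal{R}_n\to Sub_{r,\dots,r}$ over $\prod_j Gr(r,A_j)$ ($\mathcal{R}_j$ the tautological subbundle), takes the relative version of the cone over $\sigma_r(Seg(\mathbb{P}A_1'\times\cdots))$ inside the total space, and uses the Kempf--Weyman geometric technique --- in particular vanishing of higher direct images, so that generators of the relative ideal push forward to generators of $I(\sigma_r(X))/I(Sub_{r,\dots,r})$ as a $\mathbb{C}[Sub_{r,\dots,r}]$-module. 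Without that cohomological input, your argument proves that the inherited modules and the minors have the right zero set, but not that they generate the ideal.
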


According to this proposition, we only need to consider the case $3\geq\dim A_1\geq\cdots\geq\dim A_n\geq 2$.

\begin{proof}[\textbf{Outline of the proof of the main result}]

Here we present the main idea of the proof, and we will show the details in the next section.

If $T$ satisfies Strassen's equations of the partition $\{1\}\cup\{2\}\cup\{3,\dots,n\}$ and $4 \times 4$ minors of flattenings, then $T\in\sigma_3 (X_2)$. We split our discussion into $4$ cases to show $T\in \sigma_3 (X)$. Much of the proof is a careful case by case analysis. Case 1 and Case 4 are straightforward. The subtle case is Case 3, where we exploit knowledge about symmetric tensors and need the results of other cases.

\textbf{Case 1}: $T\in\sigma_3(X_2)\setminus\sigma_2(X_2)$ and $T\notin Sub_{3,2,\dots,2}(A_1\otimes \cdots\otimes A_n)$, then $T$ has one of the four types of the normal forms in Theorem~\ref{thm:normal} for $\sigma_3 (X_2)$. Because $4\times 4$ minors of $T: A^*_1\otimes A^*_3\rightarrow A_2\otimes A_4\otimes\cdots\otimes A_n$ vanish, $T$ has to have the same type of normal form for $\sigma_3 (X_3)$. Similarly, by considering $4\times 4$ minors of $T: A^*_1\otimes A^*_k\rightarrow A_2\otimes\cdots\otimes\widehat{A_k}\otimes\cdots\otimes A_n$ we use induction to show that $T$ has to maintain the same type of normal form for $\sigma_3 (X)$.

\textbf{Case 2}: $T\in\sigma_3(X_2)\setminus\sigma_2(X_2)$ and $T\in Sub_{3,2,\dots,2}(A_1\otimes \cdots\otimes A_n)\setminus Sub_{2,2,\dots,2}(A_1\otimes \cdots\otimes A_n)$, then $T$ has one of the normal forms in Theorem~\ref{thm:normal} for $\sigma_3 (X_2)$. Because $\dim A_2=\cdots=\dim A_n=2$, the discussion of this case is more complicated than \textbf{Case 1}, and we split the argument into several subcases for each type of normal forms. For each subcase, by considering $4\times 4$ minors of $T: A^*_1\otimes A^*_3\rightarrow A_2\otimes A_4\otimes\cdots\otimes A_n$ and $T: A^*_2\otimes A^*_3\rightarrow A_1\otimes A_4\otimes\cdots\otimes A_n$, we show $T$ has one of the normal forms for $\sigma_3 (X_3)$. Note that the type of the normal form of $T$ for $\sigma_3(X_2)$ could be different from the type of the normal form of $T$ for $\sigma_3(X_3)$. By induction, we show that $T$ has one of the normal forms of points in $\sigma_3 (X)$.

\textbf{Case 3}: $T\in\sigma_3(X_2)\setminus\sigma_2(X_2)$ and $T\in Sub_{2,2,\dots,2}(A_1\otimes \cdots\otimes A_n)$. In this case, $T$ has two types of normal forms, $T = (a^1_1 \otimes a^2_1 + a^1_2 \otimes a^2_2) \otimes b^3_1 + a^1_1 \otimes a^2_2 \otimes b^3_2 + a^1_2 \otimes a^2_1 \otimes b^3_3$ or $T = a^1_1 \otimes a^2_1 \otimes b^3_1 + a^1_1 \otimes a^2_2 \otimes b^3_2 + a^1_2 \otimes a^2_1 \otimes b^3_3$ for some $b^3_j \in A_3 \otimes \cdots \otimes A_n$. For the generic normal form $T = (a^1_1 \otimes a^2_1 + a^1_2 \otimes a^2_2) \otimes b^3_1 + a^1_1 \otimes a^2_2 \otimes b^3_2 + a^1_2 \otimes a^2_1 \otimes b^3_3$, we show that there is a rank $2$ matrix $\phi_{21}$ in the kernel of $T^{\wedge}_{A_2A_1}: A_1\otimes A^*_2\rightarrow A_3\otimes\cdots\otimes A_n$, and $\phi_{21}(T)\in S^2A_1\otimes (A_3\otimes\cdots\otimes A_n)$. So if for each $2\leq i\leq n$, $T$ has the generic type of normal form for $\sigma_3(Seg(\mathbb{P}A_1\times\mathbb{P}A_i\times\mathbb{P}(A_2\otimes\cdots\otimes\widehat{A_i}\otimes\cdots\otimes A_n)))$, then similarly we have a $2\times 2$ matrix $\phi_{i1}\in Ker(T^{\wedge}_{A_iA_1})$ with full rank, and $\phi_{n1}\circ\cdots\circ\phi_{21}(T)\in S^nA_1$. Since each $\phi_{i1}$ is nonsingular, $T\in \sigma_3 (X)$ if and only if $\phi_{n1}\circ\cdots\circ\phi_{21}(T)\in \sigma_3 (\nu_n(\mathbb{P}A_1))$, where $\nu_n$ is the $n$-th Veronese embedding. Since the equations for $\sigma_3(\nu_n(\mathbb{P}^1))$ are known \cite{LO}, we can check Strassen's equations and $4\times 4$ minors of flattenings give equations for $\sigma_3 (X)$ in this situation. If for some $2\leq i\leq n$, say $i=2$, $T$ does not have the generic normal form for $\sigma_3 (X_2)$, $T$ must have the other type of normal form $T = a^1_1 \otimes a^2_1 \otimes b^3_1 + a^1_1 \otimes a^2_2 \otimes b^3_2 + a^1_2 \otimes a^2_1 \otimes b^3_3$. By considering $4\times 4$ minors of $T: A^*_1\otimes A^*_3 \rightarrow A_2\otimes A_4\otimes\cdots\otimes A_n$, $T: A^*_2\otimes A^*_3 \rightarrow A_1\otimes A_4\otimes\cdots\otimes A_n$, and $T: A^*_1\otimes A^*_2\otimes A^*_3 \rightarrow A_4\otimes\cdots\otimes A_n$, we deduce $T\in \sigma_3 (X_3)$. Then we use induction to show $T\in \sigma_3 (X)$ by checking each type of the normal forms in Theorem~\ref{thm:normal}, under the assumption that $T$ is not of the generic normal form for $\sigma_3 (X_2)$. When proceeding by induction, because $\dim T(A^*_3\otimes\cdots\otimes A^*_n)\leq 3$ we can view $T$ as a tensor in $T(A^*_3\otimes\cdots\otimes A^*_n) \otimes A_3 \otimes \cdots \otimes A_n$ and reduce most cases to \textbf{Case 2}. For the remaining cases, we show directly $T\in \sigma_3 (X)$.

\textbf{Case 4}: $T\in\sigma_2(X_2)$, then $T$ has one of the three types of the normal forms in Proposition~\ref{prop:norm} for $\sigma_3 (X_2)$. We verify by induction that for each normal form $T\in\sigma_3 (X)$.

\end{proof}

\section{Proof of the main theorem}

We only need to show that if $T$ satisfies Strassen's equations of all partitions $I\cup J\cup K = \{1,\dots, n\}$ and $4 \times 4$ minors of all flattenings $I \cup J =\{1,\dots,n\}$, then $T\in \sigma_3(Seg(\mathbb{P}A_1 \times \cdots \times \mathbb{P}A_n))$. For each $A_i$ we fix a basis $\{a^i_j\}$ and its dual basis $\{\alpha^i_j\}$. Let $X_k:=Seg(\mathbb{P}A_1\times\cdots\times\mathbb{P}A_k\times\mathbb{P}(A_{k+1}\otimes\cdots\otimes A_n))$, and $X:=Seg(\mathbb{P}A_1\times\cdots\times\mathbb{P}A_n)$. For any flattening $I \cup J = \{1,\dots, n\}$, $4 \times 4$ minors of $T: A^*_I \rightarrow A_J$ vanish if and only if $\dim T(A^*_I) \leq 3$. By Corollary~\ref{cor:inhe}, we can assume $3 \geq \dim A_1 \geq \cdots \geq \dim A_n \geq 2$. Since $T$ satisfies Strassen's equations of the partition $\{1\}\cup\{2\}\cup\{3,\dots,n\}$ and $4\times 4$ minors of all flattenings, by Theorem~\ref{thm:str} we have $T\in\sigma_3 (X_2)$. We split our discussion into $4$ cases to show $T\in\sigma_3 (X)$. 

\subsection{Case 1: $T\in\sigma_3(X_2)\setminus\sigma_2(X_2)$, $T\notin Sub_{3,2,\dots,2}(A_1\otimes \cdots\otimes A_n)$}

%Since $T$ satisfies Strassen's equations for $T^{\wedge}_{12}:A_1\otimes A_2^*\rightarrow \wedge^2A_1\otimes(A_3\otimes\cdots\otimes A_n)$, $T\in \sigma_3(\mathbb{P}A_1\times\mathbb{P}A_2\times\mathbb{P}(A_3\otimes\cdots\otimes A_n))$. Thus $T$ has one of the following forms.
Since $T$ has one of the normal forms in Theorem~\ref{thm:normal}, we use induction to show $T\in\sigma_3 (X)$ by verifying each normal form.

\textbf{Type 1}: Without loss of generality, let $T=a^1_1\otimes a^2_1\otimes u_1+a^1_2\otimes a^2_2\otimes u_2+a^1_3\otimes a^2_3\otimes u_3$, where $u_i\in A_3\otimes\cdots\otimes A_n$. $\dim T(A^*_1\otimes A^*_3)\leq 3$ implies that $u_i: A_3^*\rightarrow A_4\otimes \cdots\otimes A_n$ has rank $\leq 1$ for all $i$, say $u_i=b^3_i\otimes v_i$ for some $b^3_i\in A_3$ and $v_i\in A_4\otimes\cdots\otimes A_n$. Therefore $T=a^1_1\otimes a^2_1\otimes b^3_1\otimes v_1+a^1_2\otimes a^2_2\otimes b^3_2 \otimes v_2+a^1_3\otimes a^2_3\otimes b^3_3\otimes v_3$, i.e. $T \in \sigma_3(X_3)$.

%\vspace*{2 mm}

Now we use induction, assume $T=a^1_1\otimes a^2_1\otimes b^3_1\otimes\cdots\otimes b^k_1+a^1_2\otimes a^2_2\otimes b^3_2 \otimes\cdots\otimes b^k_2+a^1_3\otimes a^2_3\otimes b^3_3\otimes\cdots\otimes b^k_3$, then $\dim T(A^*_1\otimes A^*_k)\leq 3$ implies that $b^k_i: A_k^*\rightarrow A_{k+1}\otimes \cdots\otimes A_n$ has rank $\leq 1$ for all $1 \leq i \leq 3$.

%\vspace*{3 mm}

\textbf{Type 2}: $T=a^1_1\otimes a^2_1\otimes v^3_2+a^1_1\otimes a^2_2\otimes v^3_1+a^1_2\otimes a^2_1\otimes v^3_1+a^1_3\otimes a^2_3\otimes v^3_3$, where $v^3_i\in A_3\otimes\cdots\otimes A_n$. Since $T\notin \sigma_2(X_2)$, $v^3_1$ and $v^3_3$ are non-zero. $\dim T(A^*_1\otimes A^*_3)\leq 3$ implies $v^3_1$ and $v^3_3: A_3^*\rightarrow A_4\otimes\cdots\otimes A_n$ have rank $1$, say $v^3_i=b^3_i\otimes v^4_i$ for $i=1,3$ and some $b^3_i\in A_3$, $v^4_i\in A_4\otimes\cdots\otimes A_n$, and for each $j=2,3$, $a^2_1\otimes v^3_2(\alpha^3_j)+a^2_2\otimes v^3_1(\alpha^3_j)$ is a linear combination of $a^2_1\otimes v^3_2(\alpha^3_1)+a^2_2\otimes v^3_1(\alpha^3_1)$ and $a^2_1\otimes v^3_1(\alpha^3_1)$, then $v^3_2=b^3_1\otimes v^4_2+b^3_2\otimes v^4_1$ for some $b^3_2\in A_3$ and $v^4_2\in A_4\otimes\cdots\otimes A_n$. Thus $T=a^1_2\otimes a^2_1\otimes b^3_1\otimes v^4_1+a^1_1\otimes a^2_1\otimes b^3_1\otimes v^4_2+a^1_1\otimes a^2_1\otimes b^3_2\otimes v^4_1+a^1_1\otimes a^2_2\otimes b^3_1\otimes v^4_1+a^1_3\otimes a^2_3\otimes b^3_3\otimes v^4_3$.

%\vspace*{2 mm}

Now we use induction, and assume that $T=\displaystyle\sum_{i=1}^k b^1_1\otimes\cdots\otimes b^{i-1}_1\otimes b^i_2\otimes b^{i+1}_1\otimes\cdots\otimes b^k_1+b^1_3\otimes\cdots\otimes b^k_3$, where $b^i_j=a^i_j$ for $i=1,2$ and $1\leq j\leq 3$. The induction argument is similar to the case $k=3$ above.

%\vspace*{3 mm}

\textbf{Type 3}: $T=a^1_1\otimes a^2_2\otimes v^3_2+a^1_2\otimes a^2_1\otimes v^3_2+a^1_2\otimes a^2_2\otimes v^3_1+a^1_1\otimes a^2_1\otimes v^3_3+a^1_1\otimes a^2_3\otimes v^3_1+a^1_3\otimes a^2_1\otimes v^3_1$, where $v^3_i\in A_3\otimes\cdots\otimes A_n$. If $v^3_1=0$, $T$ has been discussed in \textbf{Case 1 Type 1}. If $v^3_2=0$, $T$ has been discussed in \textbf{Case 1 Type 2}. So we assume $v^3_1$ and $v^3_2$ are non-zero. $\dim T(A^*_1\otimes A^*_3)\leq 3$ implies $v^3_1=u^3_1\otimes u^4_1$, $v^3_2=u^3_1\otimes u^4_2+u^3_2\otimes u^4_1$ and $v^3_3=u^3_1\otimes u^4_3+u^3_2\otimes u^4_2+u^3_3\otimes u^4_1$ for some $u^3_1, u^3_2, u^3_3\in A_3$, and $u^4_1, u^4_2, u^4_3\in A_4\otimes\cdots\otimes A_n$. Denote $a^i_j$ by $u^i_j$ when $i=1,2$, then $T=\displaystyle\sum_{1\leq i<j\leq 4}u^1_1\otimes\cdots\otimes u^i_2\otimes\cdots\otimes u^j_2\otimes\cdots\otimes u^4_1+\displaystyle\sum_{i=1}^4 u^1_1\otimes\cdots\otimes u^i_3\otimes\cdots\otimes u^4_1$.

%\vspace*{2 mm}

The induction argument is similar the above argument.

%\vspace*{3 mm}

\textbf{Type 4}: $T=a^1_2\otimes a^2_1\otimes v^3_2+a^1_2\otimes a^2_2\otimes v^3_1+a^1_1\otimes a^2_1\otimes v^3_3+a^1_1\otimes a^2_3\otimes v^3_1+a^1_3\otimes a^2_1\otimes v^3_1$ for some $v^3_j\in A_3\otimes\cdots\otimes A_n$. Since $T\notin \sigma_2(X_2)$, $v^3_1\neq 0$, then $\dim T(A^*_1\otimes A^*_3)\leq 3$ implies $v^3_1=u^3_1\otimes u^4_1$, $v^3_2=u^3_1\otimes u^4_2+u^3_2\otimes u^4_1$, $v^3_3=u^3_1\otimes u^4_3+u^3_3\otimes u^4_1$ for some $u^3_j\in A_3$, $u^4_j\in A_4\otimes\cdots\otimes A_n$. Denote $a^i_j$ by $u^i_j$ for $i=1,2$, then $T=\displaystyle\sum_{i=2}^4 u^1_2\otimes\cdots\otimes u^i_2\otimes\cdots\otimes u^4_1+\displaystyle\sum_{i=1}^4 u^1_1\otimes\cdots\otimes u^i_3\otimes\cdots\otimes u^4_1$.

%\vspace*{2 mm}

The induction argument is similar.

\subsection{Case 2: $T\in\sigma_3(X_2)\setminus\sigma_2(X_2)$, $T\in Sub_{3,2,\dots,2}(A_1\otimes \cdots\otimes A_n)\setminus Sub_{2,2,\dots,2}(A_1\otimes \cdots\otimes A_n)$}

We show $T\in\sigma_3 (X)$ by induction on each type of the normal forms.

\textbf{Type 1}: $T=a^1_1\otimes b^2_1\otimes b^3_1+a^1_2\otimes b^2_2\otimes b^3_2+a^1_3\otimes b^2_3\otimes b^3_3$, where $b^2_j\in A_2$ and $b^3_j\in A_3\otimes\cdots\otimes A_n$. Without loss of generality, we can assume $b^2_1$ and $b^2_2$ are linearly independent, then $b^2_3=b^2_1$ or $b^2_3=b^2_1+b^2_2$.

If $b^2_3=a^2_1$, since $\dim T(A^*_2\otimes A^*_3)\leq 3$, then either $b^3_2: A^*_3\rightarrow A_4\otimes\cdots\otimes A_n$ has rank $1$, or both $b^3_1$ and $b^3_3$ have rank $1$ as maps $A^*_3\rightarrow A_4\otimes\cdots\otimes A_n$.

When $b^3_2: A^*_3\rightarrow A_4\otimes\cdots\otimes A_n$ has rank $1$, let $b^3_2=a^3_2\otimes b^4_2$ for some $b^4_2 \in A_4 \otimes \cdots \otimes A_n$. We only need to consider the case that at least one of $b^3_1$ and $b^3_3: A^*_3 \rightarrow A_4 \otimes \cdots \otimes A_n$ has rank $2$. Without loss of generality we can assume $b^3_1=u^3_1\otimes b^4_1 + u^3_3\otimes b^4_3$ for some $u^3_i \in A_3$ and $b^4_i \in A_4 \otimes \cdots \otimes A_n$ where $i = 1, 3$, then $\dim T(A^*_1\otimes A^*_3)\leq 3$ requires $b^3_3(\alpha^3_j)=x_jb^4_1+y_jb^4_3$ for some $x_j, y_j$, where $j=1,2$. Consider $A_3\otimes V_4$, where $V_4$ is spanned by $b^4_1$ and $b^4_3$, after a change of basis, we can assume $b^3_1=u^3_1\otimes b^4_1+u^3_3\otimes b^4_3$ and $b^3_3=\lambda u^3_1\otimes b^4_1+u^3_1\otimes b^4_3+\lambda u^3_3\otimes b^4_3$, or $b^3_3=\mu u^3_1\otimes b^4_1+\nu u^3_3\otimes b^4_3$. Then $T=T'+a^1_2\otimes b^2_2\otimes a^3_2\otimes b^4_2$, where $T'=(a^1_1+\lambda a^1_3)\otimes b^2_1\otimes u^3_1\otimes b^4_1+(a^1_1+\lambda a^1_3)\otimes b^2_1\otimes u^3_3\otimes b^4_3+a^1_3\otimes b^2_1\otimes u^3_1\otimes b^4_3 \in \widehat{T}_{(a^1_1+\lambda a^1_3)\otimes b^2_1\otimes u^3_1\otimes b^4_3} X_3$, or $T=(a^1_1+\mu a^1_3)\otimes b^2_1 \otimes u^3_1\otimes b^4_1+(a^1_1+\nu a^1_3)\otimes b^2_1\otimes u^3_3\otimes b^4_3+a^1_2\otimes b^2_2\otimes a^3_2\otimes b^4_2$.

When $b^3_1$ and $b^3_3: A^*_3\rightarrow A_4\otimes\cdots\otimes A_n$ have rank $1$, say $b^3_1=a^3_1\otimes b^4_1$ and $b^3_3=u^3_3\otimes b^4_3$ for some $u^3_3 \in A_3$ and $b^4_i \in A_4 \otimes \cdots \otimes A_n$ where $i = 1, 3$, and assume $b^3_2: A^*_3 \rightarrow A_4 \otimes \cdots \otimes A_n$ has rank $2$, $\dim T(A^*_2\otimes A^*_3)\leq 3$ requires $u^3_3=a^3_1$ up to a scalar, and $\dim T(A^*_1\otimes A^*_3)\leq 3$ requires $b^4_1=b^4_3$ up to a scalar, then $T=(a^1_1+a^1_3)\otimes b^2_1\otimes a^3_1\otimes b^4_1 + a^1_2\otimes b^2_2\otimes a^3_1\otimes b^3_2(\alpha^3_1) + a^1_2\otimes b^2_2\otimes a^3_2 \otimes b^3_2(\alpha^3_2)$.

If $b^2_3=b^2_1+b^2_2$, $\dim T(A^*_2\otimes A^*_3)\leq 3$ implies $b^3_1$ or $b^3_2: A^*_3 \rightarrow A_4 \otimes \cdots \otimes A_n$ has rank $1$. If only one of them has rank $1$, without loss of generality we assume that $b^3_2=a^3_1\otimes u^4_1+a^3_2\otimes u^4_2$, and $b^3_1=u^3_1\otimes u^4_3$. $\dim T(A^*_2\otimes A^*_3)\leq 3$ implies $b^3_3=u^3_1\otimes u^4_4$ for some $u^4_4\in A_4\otimes\cdots\otimes A_n$. $\dim T(A^*_1\otimes A^*_3)\leq 3$ requires that $u^4_3$ and $u^4_4$ are linearly dependent, then we can assume $u^4_4=u^4_3$. $\dim T(A^*_1\otimes A^*_3)\leq 3$ also requires $u^4_4$ is a linear combination of $u^4_1$ and $u^4_2$. Consider $A_3\otimes V_4$, where $V_4$ is the subspace of $A_4\otimes\cdots\otimes A_n$ spanned by $u^4_1$ and $u^4_2$, after a change of basis, we can assume $b^3_2=a^3_1\otimes u^4_1+a^3_2\otimes u^4_2$ is still the identity matrix, and $b^3_1=b^3_3=a^3_1\otimes u^4_2$ or $a^3_1\otimes u^4_1$. Then $T=(a^1_1+a^1_3)\otimes b^2_1\otimes a^3_1\otimes u^4_2+T'$, where $T'=a^1_2\otimes b^2_2\otimes a^3_1\otimes u^4_1+a^1_2\otimes b^2_2\otimes a^3_2\otimes u^4_2+a^1_3\otimes b^2_2\otimes a^3_1\otimes u^4_2\in \widehat{T}_{a^1_2\otimes b^2_2\otimes a^3_1\otimes u^4_2} X_3$, or $T=(a^1_1+a^1_3)\otimes b^2_1\otimes a^3_1\otimes u^4_1+(a^1_2+a^1_3)\otimes b^2_2\otimes a^3_1\otimes u^4_1+a^1_2\otimes b^2_2\otimes a^3_2\otimes u^4_2$.

If both $b^3_1$ and $b^3_2$ have rank $1$, let $b^3_1=a^3_1\otimes u^4_1$ and $b^3_2=u^3_2\otimes u^4_2$. If $u^4_1$ and $u^4_2$ are linearly independent, $\dim T(A^*_1\otimes A^*_3)\leq 3$ implies $b^3_3: A^*_3 \rightarrow A_4 \otimes \cdots \otimes A_n$ has rank $1$. If $u^4_1$ and $u^4_2$ are dependent, say $u^4_1=u^4_2$, and if $u^3_2=a^3_1$ up to a scalar, since $\dim T(A^*_1\otimes A^*_3)\leq 3$, then $b^3_3(\alpha^3_1)=xb^3_3(\alpha^3_2)+yu^4_1$ for some $x$, $y$. So $T=(a^1_1+ya^1_3)\otimes b^2_1\otimes a^3_1\otimes u^4_1+(a^1_2+ya^1_3)\otimes b^2_2\otimes a^3_1\otimes u^4_1+a^1_3\otimes(b^2_1+b^2_2)\otimes (xa^3_1+a^3_2)\otimes b^3_3(\alpha^3_2)$. If $u^3_2$ and $a^3_1$ are independent, we can assume $u^3_2=a^3_2$, since $\dim T(A^*_2\otimes A^*_3)\leq 3$, then $b^3_3: A^*_3 \rightarrow A_4 \otimes \cdots \otimes A_n$ has rank $1$.

%\vspace*{2 mm}

Now we use induction. Assume $T=a^1_1\otimes b^2_1\otimes\cdots\otimes b^k_1+a^1_2\otimes b^2_2\otimes\cdots\otimes b^k_2+a^1_3\otimes b^2_3\otimes\cdots\otimes b^k_3$, without loss of generality we can assume $b^2_1=a^2_1$, $b^2_2=a^2_2$, then $b^2_3=a^2_1$ or $b^2_3=a^2_1+a^2_2$. The induction argument is similar to the case $k=3$.

%\vspace*{3 mm}

\textbf{Type 2}: $T=a^1_1\otimes b^2_1\otimes b^3_2+a^1_1\otimes b^2_2\otimes b^3_1+a^1_2\otimes b^2_1\otimes b^3_1+a^1_3\otimes b^2_3\otimes b^3_3$, without loss of generality we can assume $b^2_1=a^2_1$ and $b^2_2=a^2_2$, then $b^2_3=a^2_1$, or $b^2_3=a^2_2+\lambda a^2_1$ for some $\lambda\in\mathbb{C}$.

When $b^2_3=a^2_1$, $\dim T(A^*_2\otimes A^*_3)\leq 3$ forces $b^3_1: A^*_3 \rightarrow A_4 \otimes \cdots \otimes A_n$ has rank $1$, say $b^3_1=a^3_1\otimes b^4_1$. If $b^3_3: A^*_3\rightarrow A_4\otimes\cdots\otimes A_n$ has rank $2$, say $b^3_3=a^3_1\otimes b^4_2+a^3_2\otimes b^4_3$, then $\dim T(A^*_1\otimes A^*_3)\leq 3$ requires that $b^4_1$ and $b^3_2(\alpha^3_2)$ are both in the subspace spanned by $b^4_2$ and $b^4_3$. After a change of basis, we can assume that $b^3_3=a^3_1\otimes b^4_2+a^3_2\otimes b^4_3$, and $b^3_1=a^3_1\otimes b^4_2$ or $b^3_1=a^3_1\otimes b^4_3$. We can assume $b^3_2(\alpha^3_2)=b^4_2+\lambda b^4_3$ or $b^3_2(\alpha^3_2)=b^4_3$. So we have four cases:

Case 1: If $b^3_1=a^3_1\otimes b^4_3$ and $b^3_2(\alpha^3_2)=b^4_2+\lambda b^4_3$, $T=a^1_1\otimes a^2_1\otimes a^3_1\otimes b^3_2(\alpha^3_1)+a^1_1\otimes a^2_1\otimes (\lambda a^3_2)\otimes b^4_3+a^1_1\otimes a^2_2\otimes a^3_1\otimes b^4_3+a^1_2\otimes a^2_1\otimes a^3_1\otimes b^4_3+a^1_1\otimes a^2_1\otimes a^3_2\otimes b^4_2+a^1_3\otimes a^2_1\otimes a^3_1\otimes b^4_2+a^1_3\otimes a^2_1\otimes a^3_2\otimes b^4_3$. Let $S(t)=(a^1_1+ta^1_3+t^2a^1_2)\otimes(a^2_1+t^2a^2_2)\otimes(a^3_1+ta^3_2+t^2\lambda a^3_2)\otimes(b^4_3+tb^4_2+t^2b^3_2(\alpha^3_1))$, then $T=S''(0)$.

Case 2: If $b^3_1=a^3_1\otimes b^4_3$ and $b^3_2(\alpha^3_2)=b^4_3$, then $T=T'+T''$, where $T'=a^1_1\otimes a^2_1\otimes a^3_1\otimes b^3_2(\alpha^3_1)+a^1_1\otimes a^2_1\otimes a^3_2\otimes b^4_3+a^1_1\otimes a^2_2\otimes a^3_1\otimes b^4_3+a^1_2\otimes a^2_1\otimes a^3_1\otimes b^4_3\in\widehat{T}_{a^1_1\otimes a^2_1\otimes a^3_1\otimes b^4_3} X_3$, and $T''=a^1_3\otimes a^2_1\otimes a^3_1\otimes b^4_2+a^1_3\otimes a^2_1\otimes a^3_2\otimes b^4_3 \in \widehat{T}_{a^1_3\otimes a^2_1\otimes a^3_1\otimes b^4_3} X_3$.

Case 3: If $b^3_1=a^3_1\otimes b^4_2$ and $b^3_2(\alpha^3_2)=b^4_2+\lambda b^4_3$, $T=T'+(\lambda a^1_1+a^1_3)\otimes a^2_1\otimes a^3_2\otimes b^4_3$, where $T'=a^1_1\otimes a^2_1\otimes a^3_1\otimes b^3_2(\alpha^3_1)+a^1_1\otimes a^2_1\otimes a^3_2\otimes b^4_2+a^1_1\otimes a^2_2\otimes a^3_1\otimes b^4_2+(a^1_2+a^1_3)\otimes a^2_1\otimes a^3_1\otimes b^4_2\in \widehat{T}_{a^1_1\otimes a^2_1\otimes a^3_1\otimes b^4_2} X_3$.

Case 4: If $b^3_1=a^3_1\otimes b^4_2$ and $b^3_2(\alpha^3_2)=b^4_3$, then $T=T'+(a^1_1+a^1_3)\otimes a^2_1\otimes a^3_2\otimes b^4_3$, where $T'=a^1_1\otimes a^2_1\otimes a^3_1\otimes b^3_2(\alpha^3_1)+a^1_1\otimes a^2_2\otimes a^3_1\otimes b^4_2+(a^1_2+a^1_3)\otimes a^2_1\otimes a^3_1\otimes b^4_2\in\widehat{T}_{a^1_1\otimes a^2_1\otimes a^3_1\otimes b^4_2} X_3$.

If $b^3_3: A^*_3 \rightarrow A_4 \otimes \cdots \otimes A_n$ has rank $1$, say $b^3_3=(xa^3_1+ya^3_2)\otimes b^4_3$, and $b^4_1$ and $b^4_3$ are linearly independent, $\dim T(A^*_1\otimes A^*_3)\leq 3$ forces $b^3_2(\alpha^3_2)$ is a linear combination of $b^4_1$ and $b^4_3$. We can assume $b^3_2(\alpha^3_2)=b^4_1$ or $b^3_2(\alpha^3_2)=b^4_3+\lambda b^4_1$. If $b^3_2(\alpha^3_2)=b^4_1$, $T=T'+a^1_3\otimes a^2_1\otimes (xa^3_1+ya^3_2)\otimes b^4_3$, where $T'=a^1_1\otimes a^2_1\otimes a^3_1\otimes b^3_2(\alpha^3_1)+a^1_1\otimes a^2_1\otimes a^3_2\otimes b^4_1+a^1_1\otimes a^2_2\otimes a^3_1\otimes b^4_1+a^1_2\otimes a^2_1\otimes a^3_1\otimes b^4_1\in\widehat{T}_{a^1_1\otimes a^2_1\otimes a^3_1\otimes b^4_1} X_3$. If $b^3_2(\alpha^3_2)=b^4_3+\lambda b^4_1$, we can assume $b^3_3=a^3_2\otimes b^4_3$ or $b^3_3=(a^3_1+\mu a^3_2)\otimes b^4_3$. If $b^3_3=a^3_2\otimes b^4_3$, then $T=T'+(a^1_1+a^1_3)\otimes a^2_1\otimes a^3_2\otimes b^4_3$, where $T'=a^1_1\otimes a^2_1\otimes a^3_1\otimes b^3_2(\alpha^3_1)+a^1_1\otimes a^2_1\otimes(\lambda a^3_2)\otimes b^4_1+a^1_1\otimes a^2_2\otimes a^3_1\otimes b^4_1+a^1_2\otimes a^2_1\otimes a^3_1\otimes b^4_1\in\widehat{T}_{a^1_1\otimes a^2_1\otimes a^3_1\otimes b^4_1} X_3$. If $b^3_3=(a^3_1+\mu a^3_2)\otimes b^4_3$, and if $\mu\neq 0$, let $\widetilde{a^3_2}=a^3_1+\mu a^3_2$, then $T=T'+(1/\mu a^1_1+a^1_3)\otimes a^2_1\otimes \widetilde{a^3_2}\otimes b^4_3$, where $T'=a^1_1\otimes a^2_1\otimes a^3_1\otimes [b^3_2(\alpha^3_1)-1/\mu (b^4_3+\lambda b^4_1)]+a^1_1\otimes a^2_1\otimes(\lambda/\mu \widetilde {a^3_2})\otimes b^4_1+a^1_1\otimes a^2_2\otimes a^3_1\otimes b^4_1+a^1_2\otimes a^2_1\otimes a^3_1\otimes b^4_1\in\widehat{T}_{a^1_1\otimes a^2_1\otimes a^3_1\otimes b^4_1} X_3$. If $\mu=0$, $T=T'+T''$, where $T'=a^1_1\otimes a^2_1\otimes a^3_1\otimes b^3_2(\alpha^3_1)+a^1_1\otimes a^2_1\otimes(\lambda a^3_2)\otimes b^4_1+a^1_1\otimes a^2_2\otimes a^3_1\otimes b^4_1+a^1_2\otimes a^2_1\otimes a^3_1\otimes b^4_1\in\widehat{T}_{a^1_1\otimes a^2_1\otimes a^3_1\otimes b^4_1} X_3$, and $T''=a^1_1\otimes a^2_1\otimes a^3_2\otimes b^4_3+a^1_3\otimes a^2_1\otimes a^3_1\otimes b^4_3\in\widehat{T}_{a^1_1\otimes a^2_1\otimes a^3_1\otimes b^4_3} X_3$. If $b^4_1$ and $b^4_3$ are linearly dependent, say $b^4_1=b^4_3$, then $T=T'+T''$, where $T'=a^1_1\otimes a^2_1\otimes a^3_1\otimes b^3_2(\alpha^3_1)+a^1_1\otimes a^2_2\otimes a^3_1\otimes b^4_1+(a^1_2+xa^1_3)\otimes a^2_1\otimes a^3_1\otimes b^4_1\in\widehat{T}_{a^1_1\otimes a^2_1\otimes a^3_1\otimes b^4_1} X_3$, and $T''=a^1_1\otimes a^2_1\otimes a^3_2\otimes b^3_2(\alpha^3_2)+(ya^1_3)\otimes a^2_1\otimes a^3_2\otimes b^4_1\in\widehat{T}_{a^1_1\otimes a^2_1\otimes a^3_2\otimes b^4_1} X_3$.

When $b^2_3=a^2_2+\lambda a^2_1$, $\dim T(A^*_2\otimes A^*_3)\leq 3$ implies three cases. Case 1: $b^3_1=a^3_1\otimes b^4_1$ and $b^3_2=a^3_1\otimes b^4_2$ for some $b^4_1, b^4_2\in A_4\otimes\cdots\otimes A_n$; Case 2: $b^3_1=a^3_1\otimes b^4_1$ and $b^3_3=a^3_1\otimes b^4_3$ for some $b^4_1, b^4_3\in A_4\otimes\cdots\otimes A_n$; Case 3: $b^3_1=a^3_1\otimes b^4_1$ and $b^3_3=a^3_2\otimes b^4_3$ for some $b^4_1, b^4_3\in A_4\otimes\cdots\otimes A_n$.

For case 1, if $b^3_3=u^3_3\otimes u^4_3$ for some $u^3_3\in A_3$ and $u^4_3\in A_4\otimes\cdots\otimes A_n$, then $T=T'+a^1_3\otimes (a^2_2+\lambda a^2_1)\otimes u^3_3\otimes u^4_3$, where $T'=a^1_1\otimes a^2_1\otimes a^3_1\otimes b^4_2+a^1_1\otimes a^2_2\otimes a^3_1\otimes b^4_1+a^1_2\otimes a^2_1\otimes a^3_1\otimes b^4_1\in\widehat{T}_{a^1_1\otimes a^2_1\otimes a^3_1\otimes b^4_1} X_3$. If $b^3_3: A^*_3\rightarrow A_4\otimes\cdots\otimes A_n$ has rank $2$, $\dim T(A^*_1\otimes A^*_3)\leq 3$ requires $b^4_1=b^4_2$ up to a scalar, and $b^4_1$ is a linear combination of $b^3_3(\alpha^3_1)$ and $b^3_3(\alpha^3_2)$, say $b^3_3(\alpha^3_1)=xb^3_3(\alpha^3_2)+yb^4_1$ or $b^4_1=b^3_3(\alpha^3_2)$ up to a scalar, then $T=(a^1_1+a^1_2+y\lambda a^1_3)\otimes a^2_1\otimes a^2_1\otimes a^3_1\otimes b^4_1+(a^1_1+ya^1_3)\otimes a^2_2\otimes a^3_1\otimes b^4_1+a^1_3\otimes (a^2_2+\lambda a^2_1)\otimes (xa^3_1+a^3_2)\otimes b^3_3(\alpha^3_2)$, or $T=T'+T''$, where $T'=a^1_1\otimes a^2_2\otimes a^3_1\otimes b^4_1+a^1_3\otimes a^2_2\otimes a^3_1\otimes b^3_3(\alpha^3_1)+a^1_3\otimes a^2_2\otimes a^3_2\otimes b^4_1\in\widehat{T}_{a^1_3\otimes a^2_2\otimes a^3_1\otimes b^4_1} X_3$, and $T''=(a^1_1+a^1_2)\otimes a^2_1\otimes a^3_1\otimes b^4_1+a^1_3\otimes a^2_1\otimes a^3_1\otimes \lambda b^3_3(\alpha^3_1)+a^1_3\otimes a^2_1\otimes (\lambda a^3_2)\otimes b^4_1\in\widehat{T}_{a^1_3\otimes a^2_1\otimes a^3_1\otimes b^4_1} X_3$.

For case 2, if $b^4_3=b^4_1$ up to a scalar, then $b^3_1=b^3_3$ up to a scalar, and $T=a^1_1\otimes a^2_1\otimes b^3_2+(a^1_1+a^1_3)\otimes a^2_2\otimes b^3_1+(a^1_2+\lambda a^1_3)\otimes a^2_1\otimes b^3_1$, which is discussed in \textbf{Case 2 Type 1}. Hence we assume $b^4_1$ and $b^4_3$ are linearly independent. $\dim T(A^*_1\otimes A^*_3)\leq 3$ implies $b^3_2(\alpha^3_2)=b^4_1$ up to a scalar, then $T=T'+a^1_3\otimes (a^2_2+\lambda a^2_1)\otimes a^3_1\otimes b^4_3$, where $T'=a^1_1\otimes a^2_1\otimes a^3_1\otimes b^3_2(\alpha^3_1)+a^1_1\otimes a^2_1\otimes a^3_2\otimes b^4_1+a^1_1\otimes a^2_2\otimes a^3_1\otimes b^4_1+a^1_2\otimes a^2_1\otimes a^3_1\otimes b^4_1\in\widehat{T}_{a^1_1\otimes a^2_1\otimes a^3_1\otimes b^4_1} X_3$.

For case 3, $\dim T(A^*_2\otimes A^*_3)\leq 3$ requires $b^3_2(\alpha^3_2)=b^4_1$ up to a scalar. Then $T=T'+a^1_3\otimes (a^2_2+\lambda a^2_1)\otimes a^3_2\otimes b^4_3$, where $T'=a^1_1\otimes a^2_1\otimes a^3_1\otimes b^3_2(\alpha^3_1)+a^1_1\otimes a^2_1\otimes a^3_2\otimes b^4_1+a^1_1\otimes a^2_2\otimes a^3_1\otimes b^4_1+a^1_2\otimes a^2_1\otimes a^3_1\otimes b^4_1\in\widehat{T}_{a^1_1\otimes a^2_1\otimes a^3_1\otimes b^4_1} X_3$.

%\vspace*{2 mm}

Now we assume $T=\displaystyle\sum_{i=1}^k b^1_1\otimes\cdots\otimes b^{i-1}_1\otimes b^i_2\otimes b^{i+1}_1\otimes \cdots\otimes b^k_1+b^1_3\otimes\cdots\otimes b^k_3$. The induction argument is similar to the case $k=3$.

%\vspace*{3 mm}

\textbf{Type 3}: $T=a^1_1\otimes b^2_2\otimes b^3_2+a^1_2\otimes b^2_1\otimes b^3_2+a^1_2\otimes b^2_2\otimes b^3_1+a^1_1\otimes b^2_1\otimes b^3_3+a^1_1\otimes b^2_3\otimes b^3_1+a^1_3\otimes b^2_1\otimes b^3_1$. Without loss of generality, we can assume $b^2_1=a^2_1$, $b^2_2=a^2_2$, and $b^2_3=xa^2_1+ya^2_2$. $\dim T(A^*_2\otimes A^*_3)\leq 3$ implies two cases. Case 1: $b^3_1=a^3_1\otimes b^4_1$ for some $b^4_1\in A_4\otimes\cdots\otimes A_n$, $b^3_2(\alpha^3_2)=b^4_1$ up to a scalar, and $b^3_2(\alpha^3_1)=b^3_3(\alpha^3_2)+\lambda b^4_1$ for some $\lambda\in\mathbb{C}$; Case 2: $b^3_1=a^3_1\otimes b^4_1$, and $b^3_2=a^3_1\otimes b^4_2$ for some $b^4_1, b^4_2\in A_4\otimes\cdots\otimes A_n$.

For case 1, $T=a^1_1\otimes a^2_2\otimes a^3_1\otimes b^3_3(\alpha^3_2)+a^1_1\otimes a^2_2\otimes a^3_2\otimes b^4_1+a^1_2\otimes a^2_1\otimes a^3_1\otimes b^3_3(\alpha^3_2)+a^1_2\otimes a^2_1\otimes a^3_2\otimes b^4_1+a^1_2\otimes a^2_2\otimes a^3_1\otimes b^4_1+a^1_1\otimes a^2_1\otimes a^3_2\otimes b^3_3(\alpha^3_2)+a^1_1\otimes a^2_1\otimes a^3_1\otimes b^3_3(\alpha^3_1)+a^1_1\otimes (y+\lambda)a^2_2\otimes a^3_1\otimes b^4_1+(xa^1_1+\lambda a^1_2+a^1_3)\otimes a^2_1\otimes a^3_1\otimes b^4_1$. Let $S(t)=[a^1_1+ta^1_2+t^2(x a^1_1+\lambda a^1_2+a^1_3)]\otimes[a^2_1+ta^2_2+t^2(y+\lambda) a^2_2]\otimes(a^3_1+ta^3_2)\otimes[b^4_1+tb^3_3(\alpha^3_2)+t^2b^3_3(\alpha^3_1)]$, then $T=S''(0)$.

For case 2, if $b^4_2=\lambda b^4_1$ for some $\lambda\in\mathbb{C}$, then $b^3_2=\lambda b^3_1$, $T=[(y+\lambda)a^1_1+a^1_2]\otimes a^2_2\otimes b^3_1+(x a^1_1+\lambda a^1_2+a^1_3)\otimes a^2_1\otimes b^3_1+a^1_1\otimes a^2_1\otimes b^3_3$, which is discussed in \textbf{Case 2 Type 1}. Thus we assume $b^4_1$ and $b^4_2$ are independent. $\dim T(A^*_1\otimes A^*_3)\leq 3$ implies $b^3_3(\alpha^3_2)=b^4_1$ up to a scalar, so $T=a^1_1\otimes a^2_2\otimes a^3_1\otimes b^4_2+a^1_2\otimes a^2_1\otimes a^3_1\otimes b^4_2+a^1_2\otimes a^2_2\otimes a^3_1\otimes b^4_1+a^1_1\otimes a^2_1\otimes a^3_1\otimes b^3_3(\alpha^3_1)+a^1_1\otimes a^2_1\otimes a^3_2\otimes b^4_1+a^1_1\otimes (xa^2_1+ya^2_2)\otimes a^3_1\otimes b^4_1+a^1_3\otimes a^2_1\otimes a^3_1\otimes b^4_1$. Let $S(t)=[a^1_1+ta^1_2+t^2a^1_3]\otimes[a^2_1+ta^2_2+t^2(xa^2_1+ya^2_2)]\otimes(a^3_1+t^2a^3_2)\otimes[b^4_1+tb^4_2+t^2b^3_3(\alpha^3_1)]$, then $T=S''(0)$.

%\vspace*{2 mm}

Now we assume $T=\displaystyle\sum_{i<j} b^1_1\otimes\cdots\otimes b^{i-1}_1\otimes b^i_2\otimes b^{i+1}_1\otimes \cdots\otimes b^{j-1}_1\otimes b^j_2\otimes b^{j+1}_1\otimes\cdots\otimes b^k_1+\displaystyle\sum_{i=1}^k b^1_1\otimes\cdots\otimes b^{i-1}_1\otimes b^i_3\otimes b^{i+1}_1\otimes \cdots\otimes b^k_1$, and use induction to show $T \in \sigma_3(X)$. The induction argument is similar to the case $k=3$.

%\vspace*{3 mm}

\textbf{Type 4}: $T=a^1_2\otimes b^2_1\otimes b^3_2+a^1_2\otimes b^2_2\otimes b^3_1+a^1_1\otimes b^2_1\otimes b^3_3+a^1_1\otimes b^2_3\otimes b^3_1+a^1_3\otimes b^2_1\otimes b^3_1$. If $b^2_2=b^2_1$, $T=a^1_2\otimes b^2_1\otimes b^3_2+a^1_1\otimes b^2_1\otimes b^3_3+a^1_1\otimes b^2_3\otimes b^3_1+(a^1_2+a^1_3)\otimes b^2_1\otimes b^3_1$, which is discussed in \textbf{Case 2 Type 2}. Hence we can assume $b^2_i=a^2_i$ for $1\leq i\leq 2$. Assume $b^2_3=xa^2_1+ya^2_2$, then $T=(ya^1_1+a^1_2)\otimes a^2_1\otimes b^3_2+(ya^1_1+a^1_2)\otimes a^2_2\otimes b^3_1+a^1_1\otimes a^2_1\otimes (b^3_3-yb^3_2)+(xa^1_1+a^1_3)\otimes a^2_1\otimes b^3_1$. Therefore after a change of basis, we only need to consider the case $T=a^1_2\otimes a^2_1\otimes b^3_2+a^1_2\otimes a^2_2\otimes b^3_1+a^1_1\otimes a^2_1\otimes b^3_3+a^1_3\otimes a^2_1\otimes b^3_1$. $\dim T(A^*_2\otimes A^*_3)\leq 3$ implies $b^3_1: A^*_3\rightarrow A_4\otimes\cdots\otimes A_n$ has rank $1$, say $b^3_1=a^3_1\otimes b^4_1$ for some $b^4_1\in A_4\otimes\cdots\otimes A_n$.

If $b^3_3(\alpha^3_1)$ and $b^3_3(\alpha^3_2)$ are linearly independent, $\dim T(A^*_1\otimes A^*_3)\leq 3$ implies $b^4_1, b^3_2(\alpha^3_2)$ are in $V_4$, where $V_4$ is spanned by $b^3_3(\alpha^3_1)$ and $b^3_3(\alpha^3_2)$. For the subspace $A_3\otimes V_4$, after a change of basis, we can assume $a^3_1$ and $a^3_1\otimes b^3_3(\alpha^3_1)+a^3_2\otimes b^3_3(\alpha^3_2)$ are preserved, and $b^4_1 = b^3_3(\alpha^3_1)$, or $b^4_1 = b^3_3(\alpha^3_2)$. So we have two cases:

Case 1: If $b^4_1=b^3_3(\alpha^3_1)$, assume $b^3_2(\alpha^3_2)=xb^3_3(\alpha^3_1)+yb^3_3(\alpha^3_2)$, then $T=T'+(ya^1_2+a^1_1)\otimes a^2_1\otimes a^3_2\otimes b^3_3(\alpha^3_2)$, where $T'=a^1_2\otimes a^2_1\otimes a^3_1\otimes b^3_2(\alpha^3_1)+a^1_2\otimes a^2_1\otimes (xa^3_2)\otimes b^3_3(\alpha^3_1)+a^1_2\otimes a^2_2\otimes a^3_1\otimes b^3_3(\alpha^3_1)+(a^1_1+a^1_3)\otimes a^2_1\otimes a^3_1\otimes b^3_3(\alpha^3_1)\in\widehat{T}_{a^1_2\otimes a^2_1\otimes a^3_1\otimes b^3_3(\alpha^3_1)} X_3$.

Case 2: If $b^4_1=b^3_3(\alpha^3_2)$, we can assume $b^3_2(\alpha^3_2)=b^3_3(\alpha^3_1)+\lambda b^3_3(\alpha^3_2)$ for some $\lambda\in\mathbb{C}$, or $b^3_2(\alpha^3_2)=\lambda b^3_3(\alpha^3_2)$. If $b^3_2(\alpha^3_2)=b^3_3(\alpha^3_1)+\lambda b^3_3(\alpha^3_2)$, $T=a^1_2\otimes a^2_1\otimes a^3_1\otimes b^3_2(\alpha^3_1)+a^1_2\otimes a^2_1\otimes (\lambda a^3_2)\otimes b^3_3(\alpha^3_2)+a^1_2\otimes a^2_2\otimes a^3_1\otimes b^3_3(\alpha^3_2)+a^1_3\otimes a^2_1\otimes a^3_1\otimes b^3_3(\alpha^3_2)+a^1_2\otimes a^2_1\otimes a^3_2\otimes b^3_3(\alpha^3_1)+a^1_1\otimes a^2_1\otimes a^3_1\otimes b^3_3(\alpha^3_1)+a^1_1\otimes a^2_1\otimes a^3_2\otimes b^3_3(\alpha^3_2)$. Let $S(t)=(a^1_2+ta^1_1+t^2a^1_3)\otimes(a^2_1+t^2a^2_2)\otimes(a^3_1+ta^3_2+t^2\lambda a^3_2)\otimes(b^3_3(\alpha^3_2)+tb^3_3(\alpha^3_1)+t^2b^3_2(\alpha^3_1))$, then $T=S''(0)$. If $b^3_2(\alpha^3_2)=\lambda b^3_3(\alpha^3_2)$, $T=T'+T''$, where $T'=a^1_2\otimes a^2_1\otimes a^3_1\otimes b^3_2(\alpha^3_1)+a^1_2\otimes a^2_1\otimes \lambda a^3_2\otimes b^3_3(\alpha^3_2)+a^1_2\otimes a^2_2\otimes a^3_1\otimes b^3_3(\alpha^3_2)+a^1_3\otimes a^2_1\otimes a^3_1\otimes b^3_3(\alpha^3_2)\in\widehat{T}_{a^1_2\otimes a^2_1\otimes a^3_1\otimes b^3_3(\alpha^3_2)} X_3$, and $T''=a^1_1\otimes a^2_1\otimes a^3_1\otimes b^3_3(\alpha^3_1)+a^1_1\otimes a^2_1\otimes a^3_2\otimes b^3_3(\alpha^3_2)\in\widehat{T}_{a^1_1\otimes a^2_1\otimes a^3_1\otimes b^3_3(\alpha^3_2)} X_3$.

If $b^3_3(\alpha^3_2)=\lambda b^3_3(\alpha^3_1)$ for some $\lambda\in\mathbb{C}$, then we can assume $b^3_3=a^3_1\otimes b^3_3(\alpha^3_1)$ or $b^3_3=a^3_2\otimes b^3_3(\alpha^3_1)$. Thus we have four cases:

Case 1: If $b^3_3=a^3_1\otimes b^3_3(\alpha^3_1)$, $b^3_3(\alpha^3_1)$ and $b^4_1$ are linearly independent, we can assume $b^3_2(\alpha^3_2)=xb^4_1+yb^3_3(\alpha^3_1)$ for some $x, y\in\mathbb{C}$ due to $\dim T(A^*_1\otimes A^*_3)$, then $T=T'+T''$, where $T'=a^1_2\otimes a^2_1\otimes a^3_1\otimes b^3_2(\alpha^3_1)+a^1_2\otimes a^2_1\otimes xa^3_2\otimes b^4_1+a^1_2\otimes a^2_2\otimes a^3_1\otimes b^4_1+a^1_3\otimes a^2_1\otimes a^3_1\otimes b^4_1\in\widehat{T}_{a^1_2\otimes a^2_1\otimes a^3_1\otimes b^4_1} X_3$, and $T''=a^1_2\otimes a^2_1\otimes ya^3_2\otimes b^3_3(\alpha^3_1)+a^1_1\otimes a^2_1\otimes a^3_1\otimes b^3_3(\alpha^3_1)\in\widehat{T}_{a^1_2\otimes a^2_1\otimes a^3_1\otimes b^3_3(\alpha^3_1)} X_3$. 

Case 2: If $b^3_3=a^3_1\otimes b^3_3(\alpha^3_1)$ and $b^3_3(\alpha^3_1)=\mu b^4_1$ for some $\mu\in\mathbb{C}$, $T=T'+a^1_2\otimes a^2_1\otimes a^3_2\otimes b^3_2(\alpha^3_2)$, where $T'=a^1_2\otimes a^2_1\otimes a^3_1\otimes b^3_2(\alpha^3_1)+a^1_2\otimes a^2_2\otimes a^3_1\otimes b^4_1+(\mu a^1_1+a^1_3)\otimes a^2_1\otimes a^3_1\otimes b^4_1\in\widehat{T}_{a^1_2\otimes a^2_1\otimes a^3_1\otimes b^4_1} X_3$.

Case 3: If $b^3_3=a^3_2\otimes b^3_3(\alpha^3_1)$, $b^3_3(\alpha^3_1)$ and $b^4_1$ are linearly independent, we can assume $b^3_2(\alpha^3_2)=xb^4_1+yb^3_3(\alpha^3_1)$ due to $\dim T(A^*_1\otimes A^*_3)$, then $T=T'+(ya^1_2+a^1_1)\otimes a^2_1\otimes a^3_2\otimes b^3_3(\alpha^3_1)$, where $T'=a^1_2\otimes a^2_1\otimes a^3_1\otimes b^3_2(\alpha^3_1)+a^1_2\otimes a^2_1\otimes xa^3_2\otimes b^4_1+a^1_2\otimes a^2_2\otimes a^3_1\otimes b^4_1+a^1_3\otimes a^2_1\otimes a^3_1\otimes b^4_1\in\widehat{T}_{a^1_2\otimes a^2_1\otimes a^3_1\otimes b^4_1} X_3$.

Case 4: If $b^3_3=a^3_2\otimes b^3_3(\alpha^3_1)$ and $b^3_3(\alpha^3_1)=\mu b^4_1$ for some $\mu\in\mathbb{C}$, $T=T'+T''$, where $T'=a^1_2\otimes a^2_1\otimes a^3_1\otimes b^3_2(\alpha^3_1)+a^1_2\otimes a^2_2\otimes a^3_1\otimes b^4_1+a^1_3\otimes a^2_1\otimes a^3_1\otimes b^4_1\in\widehat{T}_{a^1_2\otimes a^2_1\otimes a^3_1\otimes b^4_1} X_3$, and $T''=a^1_1\otimes a^2_1\otimes a^3_2\otimes \mu b^4_1+a^1_2\otimes a^2_1\otimes a^3_2\otimes b^3_2(\alpha^3_2)\in\widehat{T}_{a^1_2\otimes a^2_1\otimes a^3_2\otimes b^4_1} X_3$.

%\vspace*{2 mm}

Now assume $T=\displaystyle\sum_{i=2}^k b^1_2\otimes b^2_1\otimes\cdots\otimes b^{i-1}_1\otimes b^i_2\otimes b^{i+1}_1\otimes\cdots\otimes b^k_1+\sum_{i=1}^k b^1_1\otimes\cdots\otimes b^{i-1}_1\otimes b^i_3\otimes b^{i+1}_1\otimes\cdots\otimes b^k_1$, and use induction to show $T \in \sigma_3(X)$. The induction argument is similar to the case $k=3$.

\subsection{Case 3: $T\in\sigma_3(X_2)\setminus\sigma_2(X_2)$, $T\in Sub_{2,2,\dots,2}(A_1\otimes \cdots\otimes A_n)$}

Since $\dim T(A^*_3\otimes\cdots\otimes A^*_n)\leq 3$, then after a change of basis we can assume $T(A^*_3\otimes\cdots\otimes A^*_n)\subset V$, where $V$ is spanned by $\{a^1_1\otimes a^2_1+a^1_2\otimes a^2_2, a^1_1\otimes a^2_2, a^1_2\otimes a^2_1\}$ or $\{a^1_1\otimes a^2_1, a^1_1\otimes a^2_2, a^1_2\otimes a^2_1\}$. So $T$ has $2$ types of normal forms.

\textbf{Type 1}: $T=(a^1_1\otimes a^2_1+a^1_2\otimes a^2_2)\otimes b^3_1+a^1_1\otimes a^2_2\otimes b^3_2+a^1_2\otimes a^2_1\otimes b^3_3$, we reduce the problem to finding equations for $\sigma_3(\nu_n(\mathbb{P}^1))$, which has been settled.

\begin{lemma}
Let $T\in A\otimes B\otimes C$, where $\dim A=\dim B$. If there is an element $\phi\in Ker(T^{\wedge}_{BA})$ with full rank, then $\phi(T)\in S^2A\otimes C$.
\end{lemma}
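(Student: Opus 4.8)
The plan is to show that $\phi$ lying in $Ker(T^{\wedge}_{BA})$ is literally the same condition as $\phi(T)$ being symmetric in its two $A$-factors, by identifying $T^{\wedge}_{BA}(\phi)$ with the skew part of $\phi(T)$.

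First I would make the two maps explicit. Regarding $\phi\in A\otimes B^*$ as a homomorphism $\phi\colon B\rightarrow A$, the element $\phi(T)$ is obtained by contracting the $B$-factor of $T$ against the $B^*$-factor of $\phi$, so it naturally lands in $A\otimes A\otimes C$, one $A$ coming from $\phi$ and the other from $T$. Writing $\phi=\sum_i a_i\otimes\beta_i$, the composite defining $T^{\wedge}_{BA}$ sends $\phi$ first to $(Id_A\otimes T)(\phi)=\sum_i a_i\otimes T(\beta_i)\in A\otimes A\otimes C$. The key (and essentially only) step is to observe that $\sum_i a_i\otimes T(\beta_i)$ is exactly $\phi(T)$ up to the order of the two $A$-factors: expanding $T$ and pairing $\beta_i$ against the $B$-slot of $T$ produces the same tensor as contracting $T$ by $\phi$, the two conventions differing only by the transposition $\sigma$ of the two copies of $A$.

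Consequently $T^{\wedge}_{BA}(\phi)=(\pi\otimes Id_C)(\phi(T))$ up to sign, where $\pi\colon A\otimes A\rightarrow\wedge^2 A$ is the skew-symmetrization; that is, $T^{\wedge}_{BA}(\phi)$ is precisely the image of $\phi(T)$ under projecting the two $A$-factors onto $\wedge^2 A$. Since $A\otimes A=S^2A\oplus\wedge^2 A$ and $\ker\pi=S^2A$, this yields at once that $\phi\in Ker(T^{\wedge}_{BA})$ if and only if $(\pi\otimes Id_C)(\phi(T))=0$ if and only if $\phi(T)\in S^2A\otimes C$, which is the desired conclusion.

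The only real care needed is bookkeeping: keeping straight which $A$-factor of $\phi(T)$ comes from $\phi$ and which from $T$, and tracking the sign in $\pi$; but since only the vanishing of the skew part is relevant, these sign ambiguities are harmless. I would also remark that the full-rank hypothesis on $\phi$ is not actually used in this implication — the symmetry of $\phi(T)$ holds for every $\phi$ in the kernel — and is presumably recorded because the application iterates the nonsingular maps $\phi_{i1}$ in order to land in $S^nA_1$.
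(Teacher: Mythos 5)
Your proof is correct and is essentially the paper's argument phrased coordinate-freely: the paper computes in a basis that $\phi\in Ker(T^{\wedge}_{BA})$ forces the coefficient identity $\sum_j \beta^l_j\alpha^{ijk}=\sum_j \beta^i_j\alpha^{ljk}$, which is exactly your statement that the skew part of $\phi(T)$ in the two $A$-factors vanishes. Your side remark is also accurate — the paper's proof likewise never uses the full-rank hypothesis, which only matters later when the nonsingular $\phi_{i1}$ are composed to preserve rank.
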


\begin{proof}[Proof of the lemma]

Let $\{a_i\}$, $\{b_j\}$, $\{c_k\}$ be bases for $A$, $B$, $C$ respectively, and $\{a^i\}$, $\{b^j\}$, $\{c^k\}$ their dual bases. Let $T=\sum \alpha^{ijk} a_i\otimes b_j\otimes c_k$, then $T^{\wedge}_{BA}: a_l\otimes b^j\mapsto\sum_{i, k}\alpha^{ijk} (a_l\wedge a_i)\otimes c_k$. Let $\phi=\sum \beta^l_j a_l\otimes b^j\in Ker(T^{\wedge}_{BA})$, then $\sum \beta^l_j\alpha^{ijk}(a_l\wedge a_i)\otimes c_k=0$, which means $\sum_j \beta^l_j\alpha^{ijk}=\sum_j \beta^i_j\alpha^{ljk}$. Since $\phi(T)=\sum \beta^l_j\alpha^{ijk}a_i\otimes a_l\otimes c_k$, then $\phi(T)\in S^2A\otimes C$.

\end{proof}

Let $V$ be a complex vector space. Given $\phi\in S^dV$, let $\phi_{a,d-a}\in S^aV\otimes S^{d-a}V$ denote the $(a,d-a)$-polarization of $\phi$. As a linear map $S^aV^*\rightarrow S^{d-a}V$, $\operatorname{rank}(\phi_{a,d-a})\leq r$ if $[\phi]\in\sigma_r(\nu_d(\mathbb{P}V))$ \cite{LO}.

\begin{theorem}[\cite{LO}]
\label{thm:veronese}
$\sigma_3(\nu_3(\mathbb{P}^n))$ is ideal theoretically defined by Aronhold invariant and size $4$ minors of $\phi_{1,2}$. $\sigma_3(\nu_d(\mathbb{P}^n))$ is scheme theoretically defined by size $4$ minors of $\phi_{2,2}$ and $\phi_{1,3}$ when $d\geq 4$.
\end{theorem}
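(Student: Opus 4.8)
The plan is to prove both assertions by separating them into the two inclusions and then upgrading the set-theoretic statement to the scheme- and ideal-theoretic one via a representation-theoretic description of the homogeneous ideal. Let $Z$ denote the zero locus of the listed equations. I would first establish $\sigma_3(\nu_d(\mathbb{P}V))\subseteq Z$ (necessity of the equations), then the reverse containment $Z\subseteq\sigma_3(\nu_d(\mathbb{P}V))$ (sufficiency), and finally match the two ideals degree by degree. The organizing tool throughout is apolarity: for $\phi\in S^dV$ the catalecticant $\phi_{a,d-a}\colon S^aV^*\to S^{d-a}V$ measures the number of linearly independent $a$-th derivatives of $\phi$, and its kernel sits inside the apolar ideal $\phi^\perp$.

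Necessity is the short half. If $[\phi]\in\sigma_3(\nu_d(\mathbb{P}V))$, then $\phi=\lim_{t\to0}\bigl(\ell_1(t)^d+\ell_2(t)^d+\ell_3(t)^d\bigr)$; each summand $\ell^d$ has a rank-one catalecticant, so $\operatorname{rank}(\phi_{a,d-a})\le 3$ on the dense subset of honest sums and, by lower semicontinuity of matrix rank, on all of $\sigma_3$. Hence every size-$4$ minor of $\phi_{1,2}$, of $\phi_{1,3}$, and of the middle catalecticant $\phi_{2,2}$ vanishes. For $d=3$ the Aronhold invariant is exactly the defining equation of the hypersurface $\sigma_3(\nu_3(\mathbb{P}^2))\subset\mathbb{P}^9$, and the $GL(V)$-module it generates vanishes on $\sigma_3(\nu_3(\mathbb{P}V))$ for every $V$ because each point lies in $S^3U$ for some $U\subseteq V$ with $\dim U\le3$, to which the invariant restricts classically.

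For sufficiency I would first reduce the number of variables. The size-$4$ minors of the first catalecticant $\phi_{1,d-1}$ vanish precisely when $\operatorname{rank}(\phi_{1,d-1})\le 3$, i.e. when $\phi$ has at most three essential variables; this cuts out the subspace variety $Sub_3(S^dV)$, so an inheritance argument (the Veronese analogue of Proposition~\ref{cor:inhe}) reduces the problem to ternary forms. For $n=2$ the key point, special to $r=3$, is that every zero-dimensional scheme of length $\le 3$ is smoothable: given $[\phi]\in Z$, the rank bound on the middle catalecticant forces $\phi^\perp$ to contain a saturated ideal defining a subscheme $Z_\phi\subset\mathbb{P}^2$ of length $\le 3$, so $\phi\in\langle\nu_d(Z_\phi)\rangle\subseteq\sigma_3(\nu_d(\mathbb{P}^2))$. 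For $d\ge4$ the existence of such $Z_\phi$ follows from the rank conditions on $\phi_{2,2}$ and $\phi_{1,3}$ via Macaulay's correspondence; for $d=3$ it follows from the vanishing of the Aronhold invariant, which classically characterizes ternary cubics of border rank $\le3$.

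The main obstacle is the \emph{scheme-} and \emph{ideal-theoretic} strength, not the bare set-theoretic containment. To obtain it I would compute the $GL(V)$-module structure of the graded pieces of $I(\sigma_3(\nu_d(\mathbb{P}V)))$ and of the ideal generated by the minors (together with the Aronhold module when $d=3$), using Schur--Weyl duality and the known Hilbert function of $\sigma_3$, and check that they agree. The delicate step is ruling out spurious or embedded components of the catalecticant determinantal variety, i.e. showing it is reduced and equals $\sigma_3$ with the correct multiplicity. This is exactly where the dichotomy appears: for $d\ge4$ the pair $\phi_{2,2},\phi_{1,3}$ already generates the saturated ideal (scheme-theoretic), whereas for $d=3$ the single middle catalecticant $\phi_{1,2}$ is too small and the Aronhold generator must be adjoined to reach the full ideal.
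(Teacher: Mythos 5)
This statement is quoted in the paper as an external result of Landsberg--Ottaviani (reference [LO]); the paper supplies no proof of it, so there is no internal argument to compare yours against. Judged on its own, your outline has the right architecture and is consistent with how the result is actually established in the literature: necessity by semicontinuity of catalecticant rank plus restriction of the Aronhold module to three essential variables; sufficiency by first cutting down to the subspace variety $Sub_3$ via the rank conditions on $\phi_{1,d-1}$ and an inheritance argument, then handling ternary forms by apolarity and the smoothability of zero-dimensional schemes of length at most $3$; and finally an upgrade from sets to schemes and ideals by comparing $GL(V)$-modules degree by degree.

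However, the two steps that carry essentially all of the difficulty are asserted rather than argued. First, the claim that the rank bounds on $\phi_{2,2}$ and $\phi_{1,3}$ (resp.\ the Aronhold invariant and $\phi_{1,2}$ when $d=3$) force $\phi^{\perp}$ to contain a \emph{saturated} ideal cutting out a subscheme of length at most $3$ is exactly the content of the theorem; ``via Macaulay's correspondence'' is not a proof, and the analogous implication is false for higher secant varieties, so something special to $r=3$ must be exhibited (e.g.\ an explicit analysis of the Hilbert function of $\phi^{\perp}$ in low degrees, or a normal-form/conormal-space computation as in [LO]). Second, the ideal-theoretic claim for $d=3$ requires actually knowing the generators of $I(\sigma_3(\nu_3(\mathbb{P}^n)))$; ``compute the module structure and check that they agree'' names the goal, not the method, and the reducedness of the catalecticant determinantal locus that you flag as delicate is left entirely open. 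As a roadmap your proposal is sound, but as a proof it defers precisely the points the cited theorem exists to settle.
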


Now given any $T\in A_1\otimes \cdots\otimes A_n$, if there is some $1\leq i\leq n$, and for any $j\neq i$, there is a $\phi_{ji}\in Ker(T^{\wedge}_{A_jA_i})$ with full rank, then $\widetilde{T}=\phi_{ni}\circ\cdots\circ\phi_{1i}(T)\in S^nA_i$ has the same rank with $T$. If $T$ satisfies $4\times 4$ minors of flattenings, $\widetilde{T}$ satisfies size $4$ minors of symmetric flattenings, by Theorem~\ref{thm:veronese} $\widetilde{T}\in\sigma_3(\nu_n(\mathbb{P}^1))$, then $T\in\sigma_3 (X)$. If $T$ is of Type 1, we always have $a^1_1\otimes a^2_2+a^1_2\otimes a^2_1\in Ker(T^{\wedge}_{A_2A_1})$ with full rank, hence if for any $2\leq i\leq n$, $T$ is of Type 1 when viewed as a tensor in $A_1\otimes A_i\otimes (A_2\otimes\cdots\otimes A_{i-1}\otimes\widehat{A_i}\otimes A_{i+1}\otimes\cdots\otimes A_n)$, then $T\in\sigma_3 (X)$. If $T\in A_1\otimes A_2\otimes (A_3\otimes\cdots\otimes A_n)$ is not of Type 1, then it must be of Type 2, and we will use induction to show that $T\in\sigma_3 (X)$ in this situation.

%\vspace*{3 mm}

\textbf{Type 2}: $T=a^1_1\otimes a^2_1\otimes b^3_1+a^1_1\otimes a^2_2\otimes b^3_2+a^1_2\otimes a^2_1\otimes b^3_3$, the dimension of $T(A^*_2\otimes A^*_3)$ implies $b^3_3: A^*_3\rightarrow A_4\otimes \cdots\otimes A_n$ has rank $1$, or $b^3_2: A^*_3\rightarrow A_4\otimes\cdots\otimes A_n$ has rank $1$.

If $b^3_3: A^*_3\rightarrow A_4\otimes\cdots\otimes A_n$ has rank $1$, say $b^3_3=a^3_1\otimes b^4_3$, and $b^3_2: A^*_3\rightarrow A_4\otimes \cdots\otimes A_n$ has rank $2$, say $b^3_2=a^3_1\otimes b^4_1+a^3_2\otimes b^4_2$, then $\dim T(A^*_2\otimes A^*_3)\leq 3$ implies $b^3_1(\alpha^3_2)=\lambda b^4_1+\mu b^4_2$ for some $\lambda, \mu\in\mathbb{C}$. If $b^4_3$, $b^4_1$ and $b^4_2$ are linearly independent, then $\dim T(A^*_1\otimes A^*_2\otimes A^*_3)\leq 3$ forces $b^3_1(\alpha^3_1)=xb^4_3+yb^4_1+zb^4_2$ for some $x, y, z\in\mathbb{C}$, thus $T=a^1_1\otimes a^2_1\otimes (ya^3_1\otimes b^4_1+za^3_1\otimes b^4_2+\lambda a^3_2\otimes b^4_1+\mu a^3_2\otimes b^4_2)+a^1_1\otimes a^2_2\otimes (a^3_1\otimes b^4_1+a^3_2\otimes b^4_2)+(xa^1_1+a^1_2)\otimes a^2_1\otimes a^3_1\otimes b^4_3$. For the subspace $A_3\otimes V_4$, where $V_4\subset A_4\otimes\cdots\otimes A_n$ is spanned by $b^4_1$ and $b^4_2$, after a change of basis we can assume $a^3_1\otimes b^4_1+a^3_2\otimes b^4_2$ is preserved, $a^3_1$ is mapped to $ua^3_1+va^3_2$ for some $u, v\in\mathbb{C}$, and $ya^3_1\otimes b^4_1+za^3_1\otimes b^4_2+\lambda a^3_2\otimes b^4_1+\mu a^3_2\otimes b^4_2$ is of the Jordan canonical form, i.e. $a^3_1\otimes b^4_1+a^3_2\otimes b^4_2$, or $a^3_1\otimes b^4_1$, or $a^3_1\otimes b^4_2$, or $\beta a^3_1\otimes b^4_1+a^3_1\otimes b^4_2+\beta a^3_2\otimes b^4_2$ for some $0\neq\beta\in\mathbb{C}$. Hence we have:

Subcase 1: $T=a^1_1\otimes (a^2_1+a^2_2)\otimes a^3_1\otimes b^4_1+a^1_1\otimes (a^2_1+a^2_2)\otimes a^3_2\otimes b^4_2+(xa^1_1+a^1_2)\otimes a^2_1\otimes (ua^3_1+va^3_2)\otimes b^4_3$.

Subcase 2: $T=a^1_1\otimes (a^2_1+a^2_2)\otimes a^3_1\otimes b^4_1+a^1_1\otimes a^2_2\otimes a^3_2\otimes b^4_2+(xa^1_1+a^1_2)\otimes a^2_1\otimes (ua^3_1+va^3_2)\otimes b^4_3$.

Subcase 3: $T=T'+(xa^1_1+a^1_2)\otimes a^2_1\otimes (ua^3_1+va^3_2)\otimes b^4_3$, where $T'=a^1_1\otimes a^2_1\otimes a^3_1\otimes b^4_2+a^1_1\otimes a^2_2\otimes a^3_1\otimes b^4_1+a^1_1\otimes a^2_2\otimes a^3_2\otimes b^4_2\in\widehat{T}_{a^1_1\otimes a^2_2\otimes a^3_1\otimes b^4_2} X_3$.

Subcase 4: $T=T'+(xa^1_1+a^1_2)\otimes a^2_1\otimes (ua^3_1+va^3_2)\otimes b^4_3$, where $T'=a^1_1\otimes (\beta a^2_1+a^2_2)\otimes a^3_1\otimes b^4_1+a^1_1\otimes (\beta a^2_1+a^2_2)\otimes a^3_2\otimes b^4_2+a^1_1\otimes a^2_1\otimes a^3_1\otimes b^4_2\in\widehat{T}_{a^1_1\otimes (\beta a^2_1+a^2_2)\otimes a^3_1\otimes b^4_2} X_3$.

If $b^4_3=pb^4_1+qb^4_2$ for some $p, q\in\mathbb{C}$, for $A_3\otimes V_4$, after a change of basis we can assume $a^3_1$ and $a^3_1\otimes b^4_1+a^3_2\otimes b^4_2$ are preserved, $b^4_3=b^4_1$ or $b^4_2$, and $a^3_2\otimes b^3_1(\alpha^3_2)$ is of the form $x^1_1a^3_1\otimes b^4_1+x^1_2a^3_1\otimes b^4_2+x^2_1a^3_2\otimes b^4_1+x^2_2a^3_2\otimes b^4_2$. If $b^4_3=b^4_1$ we have:

Subcase 5: $T=T'+a^1_1\otimes (x^2_2a^2_1+a^2_2)\otimes a^3_2\otimes b^4_2$, where $T'=a^1_1\otimes a^2_1\otimes a^3_1\otimes[b^3_1(\alpha^3_1)+x^1_1b^4_1+x^1_2b^4_2]+a^1_1\otimes a^2_1\otimes (x^2_1a^3_2)\otimes b^4_1+a^1_1\otimes a^2_2\otimes a^3_1\otimes b^4_1+a^1_2\otimes a^2_1\otimes a^3_1\otimes b^4_1\in\widehat{T}_{a^1_1\otimes a^2_1\otimes a^3_1\otimes b^4_1} X_3$.

If $b^4_3=b^4_2$, by changing $a^3_2$, $b^4_2$ and $a^2_1$, we can assume $x^2_1=1$ or $0$. So we have:

Subcase 6: $T=a^1_1\otimes a^2_1\otimes a^3_1\otimes [b^3_1(\alpha^3_1)+x^1_1b^4_1+x^1_2b^4_2]+a^1_1\otimes a^2_1\otimes (x^2_2a^3_2)\otimes b^4_2+a^1_2\otimes a^2_1\otimes a^3_1\otimes b^4_2+a^1_1\otimes a^2_1\otimes a^3_2\otimes b^4_1+a^1_1\otimes a^2_2\otimes a^3_1\otimes b^4_1+a^1_1\otimes a^2_2\otimes a^3_2\otimes b^4_2$. Let $S(t)=(a^1_1+t^2a^1_2)\otimes(a^2_1+ta^2_2)\otimes(a^3_1+ta^3_2+t^2x^2_2a^3_2)\otimes[b^4_2+tb^4_1+t^2(b^3_1(\alpha^3_1)+x^1_1b^4_1+x^1_2b^4_2)]$, so $T=S''(0)$.

Subcase 7: $T=T'+T''$, where $T'=a^1_1\otimes a^2_1\otimes a^3_1\otimes [b^3_1(\alpha^3_1)+x^1_1b^4_1+x^1_2b^4_2]+a^1_1\otimes a^2_1\otimes (x^2_2a^3_2)\otimes b^4_2+a^1_2\otimes a^2_1\otimes a^3_1\otimes b^4_2\in\widehat{T}_{a^1_1\otimes a^2_1\otimes a^3_1\otimes b^4_2} X_3$, and $T''=a^1_1\otimes a^2_2\otimes a^3_1\otimes b^4_1+a^1_1\otimes a^2_2\otimes a^3_2\otimes b^4_2\in\widehat{T}_{a^1_1\otimes a^2_2\otimes a^3_1\otimes b^4_2} X_3$.

If $b^3_2: A^*_3\rightarrow A_4\otimes\cdots\otimes A_n$ has rank $1$, say $b^3_2=a^3_1\otimes b^4_2$ for some $b^4_2\in A_4\otimes\cdots\otimes A_n$, and $b^3_3: A^*_3\rightarrow A_4\otimes\cdots\otimes A_n$ has rank $2$, say $b^3_3=a^3_1\otimes b^4_1+a^3_2\otimes b^4_3$ for some $b^4_1, b^4_3\in A_4\otimes\cdots\otimes A_n$, then $\dim T(A^*_1\otimes A^*_3)\leq 3$ implies $b^3_1(\alpha^3_2)=\lambda b^4_1+\mu b^4_3$ for some $\lambda, \mu\in\mathbb{C}$. If $b^4_3$, $b^4_1$ and $b^4_2$ are linearly independent, then $\dim T(A^*_1\otimes A^*_2\otimes A^*_3)\leq 3$ forces $b^3_1(\alpha^3_1)=xb^4_1+yb^4_2+zb^4_3$ for some $x, y, z\in\mathbb{C}$. For the subspace $A_3\otimes V_4$, where $V_4\subset A_4\otimes\cdots\otimes A_n$ is spanned by $b^4_1$ and $b^4_3$, after a change of basis we can assume $a^3_1\otimes b^4_1+a^3_2\otimes b^4_3$ is preserved, $a^3_1$ is mapped to $ua^3_1+va^3_2$ for some $u, v\in\mathbb{C}$ under the new basis, and $xa^3_1\otimes b^4_1+za^3_1\otimes b^4_3+\lambda a^3_2\otimes b^4_1+\mu a^3_2\otimes b^4_3$ is of the Jordan canonical form, i.e. $a^3_1\otimes b^4_1+a^3_2\otimes b^4_3$, or $a^3_1\otimes b^4_1$, or $a^3_1\otimes b^4_3$, or $\beta a^3_1\otimes b^4_1+a^3_1\otimes b^4_3+\beta a^3_2\otimes b^4_3$ for some $0\neq\beta\in\mathbb{C}$. Hence we have:

Subcase 8: $T=(a^1_1+a^1_2)\otimes a^2_1\otimes a^3_1\otimes b^4_1+(a^1_1+a^1_2)\otimes a^2_1\otimes a^3_2\otimes b^4_3+a^1_1\otimes (ya^2_1+a^2_2)\otimes (ua^3_1+va^3_2)\otimes b^4_2$.

Subcase 9: $T=(a^1_1+a^1_2)\otimes a^2_1\otimes a^3_1\otimes b^4_1+a^1_2\otimes a^2_1\otimes a^3_2\otimes b^4_3+a^1_1\otimes (ya^2_1+a^2_2)\otimes (ua^3_1+va^3_2)\otimes b^4_2$.

Subcase 10: $T=T'+a^1_1\otimes (ya^2_1+a^2_2)\otimes (ua^3_1+va^3_2)\otimes b^4_2$, where $T'=a^1_1\otimes a^2_1\otimes a^3_1\otimes b^4_3+a^1_2\otimes a^2_1\otimes a^3_1\otimes b^4_1+a^1_2\otimes a^2_1\otimes a^3_2\otimes b^4_3\in\widehat{T}_{a^1_2\otimes a^2_1\otimes a^3_1\otimes b^4_3} X_3$.

Subcase 11: $T=T'+a^1_1\otimes (ya^2_1+a^2_2)\otimes (ua^3_1+va^3_2)\otimes b^4_2$, where $T'=(\beta a^1_1+a^1_2)\otimes a^2_1\otimes a^3_1\otimes b^4_1+(\beta a^1_1+a^1_2)\otimes a^2_1\otimes a^3_2\otimes b^4_3+a^1_1\otimes a^2_1\otimes a^3_1\otimes b^4_3\in\widehat{T}_{(\beta a^1_1+a^1_2)\otimes a^2_1\otimes a^3_1\otimes b^4_3} X_3$.

If $b^4_2=pb^4_1+qb^4_3$ for some $p, q\in\mathbb{C}$, for $A_3\otimes V_4$, after a change of basis we can assume $a^3_1$ and $a^3_1\otimes b^4_1+a^3_2\otimes b^4_3$ are preserved, $b^4_2=b^4_1$ or $b^4_3$, and $a^3_2\otimes b^3_1(\alpha^3_2)$ is of the form $x^1_1a^3_1\otimes b^4_1+x^1_2a^3_1\otimes b^4_3+x^2_1a^3_2\otimes b^4_1+x^2_2a^3_2\otimes b^4_3$. If $b^4_2=b^4_1$ we have:

Subcase 12: $T=T'+(x^2_2a^1_1+a^1_2)\otimes a^2_1\otimes a^3_2\otimes b^4_3$, where $T'=a^1_1\otimes a^2_1\otimes a^3_1\otimes [b^3_1(\alpha^3_1)+x^1_1b^4_1+x^1_2b^4_3]+a^1_1\otimes a^2_1\otimes x^2_1a^3_2\otimes b^4_1+a^1_1\otimes a^2_2\otimes a^3_1\otimes b^4_1+a^1_2\otimes a^2_1\otimes a^3_1\otimes b^4_1\in\widehat{T}_{a^1_1\otimes a^2_1\otimes a^3_1\otimes b^4_1} X_3$.

If $b^4_2=b^4_3$, by changing $a^3_2$, $b^4_3$ and $a^2_2$, we can assume $x^2_1=1$ or $0$. So we have:

Subcase 13: $T=a^1_1\otimes a^2_1\otimes a^3_1\otimes [b^3_1(\alpha^3_1)+x^1_1b^4_1+x^1_2b^4_3]+a^1_1\otimes a^2_1\otimes (x^2_2a^3_2)\otimes b^4_3+a^1_1\otimes a^2_2\otimes a^3_1\otimes b^4_3+a^1_2\otimes a^2_1\otimes a^3_1\otimes b^4_1+a^1_1\otimes a^2_1\otimes a^3_2\otimes b^4_1+a^1_2\otimes a^2_1\otimes a^3_2\otimes b^4_3$. Let $S(t)=(a^1_1+ta^1_2)\otimes(a^2_1+t^2a^2_2)\otimes(a^3_1+ta^3_2+t^2x^2_2a^3_2)\otimes[b^4_3+tb^4_1+t^2(b^3_1(\alpha^3_1)+x^1_1b^4_1+x^1_2b^4_3)]$, so $T=S''(0)$.

Subcase 14: $T=T'+T''$, where $T'=a^1_1\otimes a^2_1\otimes a^3_1\otimes [b^3_1(\alpha^3_1)+x^1_1b^4_1+x^1_2b^4_3]+a^1_1\otimes a^2_1\otimes (x^2_2a^3_2)\otimes b^4_3+a^1_1\otimes a^2_2\otimes a^3_1\otimes b^4_3\in\widehat{T}_{a^1_1\otimes a^2_1\otimes a^3_1\otimes b^4_3} X_3$, and $T''=a^1_2\otimes a^2_1\otimes a^3_1\otimes b^4_1+a^1_2\otimes a^2_1\otimes a^3_2\otimes b^4_3\in\widehat{T}_{a^1_2\otimes a^2_1\otimes a^3_1\otimes b^4_3} X_3$.

If both $b^3_2$ and $b^3_3: A^*_3\rightarrow A_4\otimes\cdots\otimes A_n$ have rank $1$, say $b^3_2=a^3_1\otimes b^4_2$ and $b^3_3=u^3_3\otimes b^4_3$ for some $u^3_3\in A_3$ and $b^4_2, b^4_3\in A_4\otimes\cdots\otimes A_n$, and $b^3_1: A^*_3\rightarrow A_4\otimes\cdots\otimes A_n$ has rank $2$, say $b^3_1=a^3_1\otimes u^4_1+a^3_2\otimes u^4_2$ for some $u^4_1, u^4_2\in A_4\otimes\cdots\otimes A_n$, $b^4_2$, $u^4_1$ and $u^4_2$ are linearly independent, then $b^4_3=xb^4_2+yu^4_1+zu^4_2$ for some $x, y, z\in\mathbb{C}$. After a change of basis, we can assume $x=0$ or $1$, $u^3_3=a^3_1$ or $a^3_2$. For the subspace $A_3\otimes V_4$, where $V_4$ is spanned by $u^4_1$ and $u^4_2$, after a change of basis we can assume $a^3_1\otimes u^4_1+a^3_2\otimes u^4_2$ and $a^3_1$ are preserved, and $yu^4_1+zu^4_2=u^4_1$ or $u^4_2$. Then we have:

Subcase 15: If $u^3_3=a^3_1$, $x=0$, $yu^4_1+zu^4_2=u^4_1$, then $T=(a^1_1+a^1_2)\otimes a^2_1\otimes a^3_1\otimes u^4_1+a^1_1\otimes a^2_1\otimes a^3_2\otimes u^4_2+a^1_1\otimes a^2_2\otimes a^3_1\otimes b^4_2$.

Subcase 16: If $u^3_3=a^3_1$, $x=0$, $yu^4_1+zu^4_2=u^4_2$, then $T=T'+a^1_1\otimes a^2_2\otimes a^3_1\otimes b^4_2$, where $T'=a^1_1\otimes a^2_1\otimes a^3_1\otimes u^4_1+a^1_1\otimes a^2_1\otimes a^3_2\otimes u^4_2+a^1_2\otimes a^2_1\otimes a^3_1\otimes u^4_2\in\widehat{T}_{a^1_1\otimes a^2_1\otimes a^3_1\otimes u^4_2} X_3$.

Subcase 17: If $u^3_3=a^3_1$, $x=1$, $yu^4_1+zu^4_2=u^4_1$, then $T=(a^1_1+a^1_2)\otimes a^2_1\otimes a^3_1\otimes (u^4_1+b^4_2)+a^1_1\otimes a^2_1\otimes a^3_2\otimes u^4_2+a^1_1\otimes (a^2_2-a^2_1)\otimes a^3_1\otimes b^4_2$.

Subcase 18: If $u^3_3=a^3_1$, $x=1$, $yu^4_1+zu^4_2=u^4_2$, then $T=T'+T''$, where $T'=a^1_1\otimes a^2_1\otimes a^3_1\otimes u^4_1+a^1_1\otimes a^2_2\otimes a^3_1\otimes b^4_2+a^1_2\otimes a^2_1\otimes a^3_1\otimes b^4_2\in\widehat{T}_{a^1_1\otimes a^2_1\otimes a^3_1\otimes b^4_2} X_3$, and $T''=a^1_1\otimes a^2_1\otimes a^3_2\otimes u^4_2+a^1_2\otimes a^2_1\otimes a^3_1\otimes u^4_2\in\widehat{T}_{a^1_1\otimes a^2_1\otimes a^3_1\otimes u^4_2} X_3$.

If $u^3_3=a^3_2$, for the subspace $A_3\otimes V_4$, after a change of basis we can assume $a^3_1\otimes u^4_1+a^3_2\otimes u^4_2$ and $a^3_2$ are preserved, $yu^4_1+zu^4_2=u^4_1$ or $u^4_2$, and $a^3_1$ is mapped to $\lambda a^3_1+\mu a^3_2$ for some $\lambda, \mu\in\mathbb{C}$ under the new basis. Then we have:

Subcase 19: If $x=0$, $yu^4_1+zu^4_2=u^4_1$, then $T=T'+a^1_1\otimes a^2_2\otimes(\lambda a^3_1+\mu a^3_2)\otimes b^4_2$, where $T'=a^1_1\otimes a^2_1\otimes a^3_1\otimes u^4_1+a^1_1\otimes a^2_1\otimes a^3_2\otimes u^4_2+a^1_2\otimes a^2_1\otimes a^3_2\otimes u^4_1\in\widehat{T}_{a^1_1\otimes a^2_1\otimes a^3_2\otimes u^4_1} X_3$.

Subcase 20: If $x=0$, $yu^4_1+zu^4_2=u^4_2$, then $T=a^1_1\otimes a^2_1\otimes a^3_1\otimes u^4_1+(a^1_1+a^1_2)\otimes a^2_1\otimes a^3_2\otimes u^4_2+a^1_1\otimes a^2_2\otimes (\lambda a^3_1+\mu a^3_2)\otimes b^4_2$.

By adjusting $a^2_2$, we can assume $\lambda a^3_1+\mu a^3_2=a^3_2$ or $a^3_1+\mu a^3_2$. So we have:

Subcase 21: If $\lambda a^3_1+\mu a^3_2=a^3_2$, $x=1$, $yu^4_1+zu^4_2=u^4_1$, $T=T'+T''$, where $T'=a^1_1\otimes a^2_1\otimes a^3_2\otimes u^4_2+a^1_1\otimes a^2_2\otimes a^3_2\otimes b^4_2+a^1_2\otimes a^2_1\otimes a^3_2\otimes b^4_2\in\widehat{T}_{a^1_1\otimes a^2_1\otimes a^3_2\otimes b^4_2} X_3$, and $T''=a^1_1\otimes a^2_1\otimes a^3_1\otimes u^4_1+a^1_2\otimes a^2_1\otimes a^3_2\otimes u^4_1\in\widehat{T}_{a^1_1\otimes a^2_1\otimes a^3_2\otimes u^4_1} X_3$.

Subcase 22: If $\lambda a^3_1+\mu a^3_2=a^3_2$, $x=1$, $yu^4_1+zu^4_2=u^4_2$, $T=(a^1_1+a^1_2)\otimes a^2_1\otimes a^3_2\otimes (u^4_2+b^4_2)+a^1_1\otimes (a^2_2-a^2_1)\otimes a^3_2\otimes b^4_2+a^1_1\otimes a^2_1\otimes a^3_1\otimes u^4_1$.

Subcase 23: If $\lambda a^3_1+\mu a^3_2=a^3_1+\mu a^3_2$, $x=1$, $yu^4_1+zu^4_2=u^4_1$, let $c^2_1=a^2_1$, $c^2_2=a^2_2-a^2_1$, $v^4_1=u^4_1+b^4_2$ and $v^4_2=b^4_2$, then $T=a^1_1\otimes a^3_1\otimes(c^2_1\otimes v^4_1+c^2_2\otimes v^4_2)+a^1_1\otimes a^3_2\otimes (c^2_1\otimes u^4_2+\mu c^2_1\otimes v^4_2+\mu c^2_2\otimes v^4_2)+a^1_2\otimes a^3_2\otimes c^2_1\otimes v^4_1=T'+a^1_1\otimes c^2_2\otimes (a^3_1+\mu a^3_2)\otimes v^4_2$, where $T'=a^1_1\otimes c^2_1\otimes a^3_1\otimes v^4_1+a^1_1\otimes c^2_1\otimes a^3_2\otimes (\mu v^4_2+u^4_2)+a^1_2\otimes c^2_1\otimes a^3_2\otimes v^4_1\in\widehat{T}_{a^1_1\otimes c^2_1\otimes a^3_2\otimes v^4_1} X_3$.

Subcase 24: If $\lambda a^3_1+\mu a^3_2=a^3_1+\mu a^3_2$, $\mu\neq 0$, $x=1$, $yu^4_1+zu^4_2=u^4_2$, let $c^2_1=a^2_1$, $c^2_2=\mu a^2_2-a^2_1$, $v^4_1=u^4_2+b^4_2$ and $v^4_2=b^4_2$, then $T=(a^1_1+a^1_2)\otimes c^2_1\otimes a^3_2\otimes v^4_1+a^1_1\otimes c^2_2\otimes (\displaystyle\frac{1}{\mu}a^3_1+a^3_2)\otimes v^4_2+a^1_1\otimes c^2_1\otimes a^3_1\otimes (u^4_1+\frac{1-\mu}{\mu}v^4_2)$.

Subcase 25: If $\lambda a^3_1+\mu a^3_2=a^3_1$, $x=1$, $yu^4_1+zu^4_2=u^4_2$, then $T=T'+(a^1_1+a^1_2)\otimes a^2_1\otimes a^3_2\otimes (u^4_2+b^4_2)$, where $T'=a^1_1\otimes a^2_1\otimes a^3_1\otimes u^4_1+a^1_1\otimes(a^2_2-a^2_1)\otimes a^3_1\otimes b^4_2+a^1_1\otimes a^2_1\otimes(a^3_1-a^3_2)\otimes b^4_2\in\widehat{T}_{a^1_1\otimes a^2_1\otimes a^3_1\otimes b^4_2} X_3$.

If $b^4_2$ is in the subspace $V_4$ spanned by $u^4_1$ and $u^4_2$, after a change of basis of $A_3\otimes V_4$ we can assume $b^3_1$ is preserved, and $b^4_2=u^4_1$ or $u^4_2$. So we have:

Subcase 26: If $b^4_2=u^4_1$, $T=a^1_1\otimes (a^2_1+a^2_2)\otimes a^3_1\otimes u^4_1+a^1_1\otimes a^2_1\otimes a^3_2\otimes u^4_2+a^1_2\otimes a^2_1\otimes u^3_3\otimes b^4_3$.

Subcase 27: If $b^4_2=u^4_2$, $T=T'+a^1_2\otimes a^2_1\otimes u^3_3\otimes b^4_3$, where $T'=a^1_1\otimes a^2_1\otimes a^3_1\otimes u^4_1+a^1_1\otimes a^2_1\otimes a^3_2\otimes u^4_2+a^1_1\otimes a^2_2\otimes a^3_1\otimes u^4_2\in\widehat{T}_{a^1_1\otimes a^2_1\otimes a^3_1\otimes u^4_2} X_3$.

Subcase 28: If $b^3_1: A^*_3\rightarrow A_4\otimes\cdots\otimes A_n$ has rank $1$, say $b^3_1=u^3_1\otimes b^4_1$ for some $u^3_1\in A_3$ and $b^4_1\in A_4\otimes\cdots\otimes A_n$, then $T=a^1_1\otimes a^2_1\otimes u^3_1\otimes b^4_1+a^1_1\otimes a^2_2\otimes a^3_1\otimes b^4_2+a^1_2\otimes a^2_1\otimes u^3_3\otimes b^4_3$.

%\vspace*{3 mm}

Now we assume $T\in\sigma_3 (X_{k-1})$, and $T$ is of Type 2, but is not of Type 1 when viewed as a tensor in $A_1\otimes A_2\otimes (A_3\otimes\cdots\otimes A_n)$. For each normal form, we show by induction that $T\in \sigma_3 (X)$.

\textbf{Subtype 1}: $T=b^1_1\otimes\cdots\otimes b^k_1+b^1_2\otimes\cdots\otimes b^k_2+b^1_3\otimes\cdots\otimes b^k_3$. Since we assume $T(A^*_3\otimes\cdots\otimes A^*_n)\subset V$, where $V$ is spanned by $a^1_1\otimes a^2_1$, $a^1_1\otimes a^2_2$, and $a^1_2\otimes a^2_1$, then $b^1_j\otimes b^2_j\in A_1\otimes A_2$ has rank $1$ for any $1\leq j\leq 3$ implies $b^1_j=a^1_1$ or $b^2_j=a^2_1$. Hence we have two subcase: $T=a^1_2\otimes a^2_1\otimes b^3_1\otimes\cdots\otimes b^k_1+a^1_1\otimes a^2_2\otimes b^3_2\otimes\cdots\otimes b^k_2+(\lambda a^1_1+\mu a^1_2)\otimes a^2_1\otimes b^3_3\otimes\cdots\otimes b^k_3$ or $T=a^1_1\otimes a^2_2\otimes b^3_1\otimes\cdots\otimes b^k_1+a^1_2\otimes a^2_1\otimes b^3_2\otimes\cdots\otimes b^k_2+a^1_1\otimes (\lambda a^2_1+\mu a^2_2)\otimes b^3_3\otimes\cdots\otimes b^k_3$. Here we only show the first case since the argument for the second case is similar. For the first subcase, if $\lambda=0$, $T$ has been discussed in \textbf{Case 3 Type 1}, so we assume $\lambda\neq 0$. Now let $c^1_1=a^1_2\otimes a^2_1$, $c^1_2=a^1_1\otimes a^2_2$, and $c^1_3=(\lambda a^1_1+\mu a^1_2)\otimes a^2_1$. From the argument of \textbf{Case 2 Type 1}, we can deduce directly that $T\in\sigma_3 (X_k)$ except for the following several subcases.

Exceptional Subcase 1: $b^3_j=a^3_j$ for $1\leq j\leq 2$, $b^3_3=a^3_1+a^3_2$, $b^k_2=a^k_1\otimes u^{k+1}_1+a^k_2\otimes u^{k+1}_2$ for some $u^{k+1}_1$, $u^{k+1}_2\in A_{k+1}\otimes\cdots\otimes A_n$, $b^k_1=b^k_3=a^k_1\otimes u^{k+1}_1$, $b^i_1=b^i_2=b^i_3$ for all $4\leq i\leq k-1$, then there is no harm to assume $k=4$. So $T=(c^1_1+c^1_3)\otimes a^3_1\otimes a^4_1\otimes u^5_2+c^1_2\otimes a^3_2\otimes a^4_1\otimes u^5_1+c^1_2\otimes a^3_2\otimes a^4_2\otimes u^5_2+c^1_3\otimes a^3_2\otimes a^4_1\otimes u^5_2$. When $\mu\neq -1$, $T=[\lambda a^1_1+(\mu+1)a^1_2]\otimes a^2_1\otimes(a^3_1+\displaystyle\frac{\mu}{\mu+1}a^3_2)\otimes a^4_1\otimes u^5_2+T'$, where $T'=a^1_1\otimes a^2_2\otimes a^3_2\otimes a^4_1\otimes u^5_1+a^1_1\otimes a^2_2\otimes a^3_2\otimes a^4_2\otimes u^5_2+a^1_1\otimes \displaystyle\frac{\lambda}{\mu+1}a^2_1\otimes a^3_2\otimes a^4_1\otimes u^5_2\in\widehat{T}_{a^1_1\otimes a^2_2\otimes a^3_2\otimes a^4_1\otimes u^5_2} X_4$. When $\mu=-1$, $T=T'+T''$, where $T'=a^1_1\otimes a^2_1\otimes \lambda a^3_1\otimes a^4_1\otimes u^5_2+(\lambda a^1_1-a^1_2)\otimes a^2_1\otimes a^3_2\otimes a^4_1\otimes u^5_2\in\widehat{T}_{a^1_1\otimes a^2_1\otimes a^3_2\otimes a^4_1\otimes u^5_2} X_4$, and $T''=a^1_1\otimes a^2_2\otimes a^3_2\otimes a^4_1\otimes u^5_1+a^1_1\otimes a^2_2\otimes a^3_2\otimes a^4_2\otimes u^5_2\in\widehat{T}_{a^1_1\otimes a^2_2\otimes a^3_2\otimes a^4_1\otimes u^5_2} X_4$.

Exceptional Subcase 2: $T=(c^1_1+c^1_3)\otimes a^3_1\otimes b^4_1\otimes\cdots\otimes b^{k-1}_1\otimes a^k_1\otimes u^{k+1}_1+(c^1_2+c^1_3)\otimes a^3_2\otimes b^4_1\otimes\cdots\otimes b^{k-1}_1\otimes a^k_1\otimes u^{k+1}_1+c^1_2\otimes a^3_2\otimes b^4_1\otimes\cdots\otimes b^{k-1}_1\otimes a^k_2\otimes u^{k+1}_2$. It is harmless to assume $k=4$. When $\mu\neq -1$, $T=[\lambda a^1_1+(\mu+1)a^1_2]\otimes a^2_1\otimes (a^3_1+\displaystyle\frac{\mu}{\mu+1}a^3_2)\otimes a^4_1\otimes u^5_1+\frac{1}{\mu+1}a^1_1\otimes [(\mu+1)a^2_2+\lambda a^2_1]\otimes a^3_2\otimes a^4_1\otimes u^5_1+a^1_1\otimes a^2_2\otimes a^3_2\otimes a^4_2\otimes u^5_2$. When $\mu=-1$, $T=T'+a^1_1\otimes a^2_2\otimes a^3_2\otimes a^4_2\otimes u^5_2$, where $T'=a^1_1\otimes a^2_1\otimes \lambda a^3_1\otimes a^4_1\otimes u^5_1+a^1_1\otimes a^2_2\otimes a^3_2\otimes a^4_1\otimes u^5_1+(\lambda a^1_1-a^1_2)\otimes a^2_1\otimes a^3_2\otimes a^4_1\otimes u^5_1\in\widehat{T}_{a^1_1\otimes a^2_1\otimes a^3_2\otimes a^4_1\otimes u^5_1} X_4$.

Exceptional Subcase 3: $T=(c^1_1+yc^1_3)\otimes a^3_1\otimes b^4_1\otimes\cdots\otimes b^{k-1}_1\otimes a^k_1\otimes u^{k+1}_1+(c^1_2+yc^1_3)\otimes a^3_2\otimes b^4_1\otimes\cdots\otimes b^{k-1}_1\otimes a^k_1\otimes u^{k+1}_1+c^1_3\otimes (a^3_1+a^3_2)\otimes b^4_1\otimes\cdots\otimes b^{k-1}_1\otimes (xa^k_1+a^k_2)\otimes b^k_3(\alpha^k_2)$ for some $x$, $y\in\mathbb{C}$. It is harmless to assume $k=4$. When $y\mu+1\neq 0$, $T=[y\lambda a^1_1+(y\mu+1)a^1_2]\otimes a^2_1\otimes (a^3_1+\displaystyle\frac{y\mu}{y\mu+1}a^3_2)\otimes a^4_1\otimes u^5_1+a^1_1\otimes (\frac{y\lambda}{y\mu+1}a^2_1+a^2_2)\otimes a^3_2\otimes a^4_1\otimes u^5_1+(\lambda a^1_1+\mu a^1_2)\otimes a^2_1\otimes (a^3_1+a^3_2)\otimes (xa^4_1+a^4_2)\otimes b^4_3(\alpha^4_2)$. When $y\mu+1=0$, $T=T'+(\lambda a^1_1+\mu a^1_2)\otimes a^2_1\otimes (a^3_1+a^3_2)\otimes (xa^4_1+a^4_2)\otimes b^4_3(\alpha^4_2)$, where $T'=a^1_1\otimes a^2_1\otimes y\lambda a^3_1\otimes a^4_1\otimes u^5_1+y(\lambda a^1_1+\mu a^1_2)\otimes a^2_1\otimes a^3_2\otimes a^4_1\otimes u^5_1+a^1_1\otimes a^2_2\otimes a^3_2\otimes a^4_1\otimes u^5_1\in\widehat{T}_{a^1_1\otimes a^2_1\otimes a^3_2\otimes a^4_1\otimes u^5_1} X_4$.

%\vspace*{3 mm}

\textbf{Subtype 2}: $T=\displaystyle\sum_{i=1}^k b^1_1\otimes \cdots \otimes b^{i-1}_1\otimes b^i_2\otimes b^{i+1}_1\otimes \cdots \otimes b^k_1+b^1_3\otimes \cdots\otimes b^k_3$. Since $T(A^*_3\otimes\cdots\otimes A^*_n)\subset V$, where $V$ is spanned by $a^1_1\otimes a^2_1$, $a^1_1\otimes a^2_2$ and $a^1_2\otimes a^2_1$, and $b^1_1\otimes b^2_1\in V$ has rank $1$, we can assume $b^1_1=a^1_1$, $b^2_1=a^2_1$. If $b^1_2$ and $b^1_1$ are linearly independent, then assume $b^1_2=a^1_2$, otherwise assume $b^1_3=a^1_2$. If $b^2_2$ and $b^2_1$ are linearly independent, then assume $b^2_2=a^2_2$, otherwise assume $b^2_3=a^2_2$. Since $b^1_3\otimes b^2_3$ is a rank $1$ matrix in $V$, then $b^1_3\otimes b^2_3=(xa^1_1+ya^1_2)\otimes a^2_1$ or $b^1_3\otimes b^2_3=a^1_1\otimes (xa^2_1+ya^2_2)$. Hence we have three subcases:

Subcase 1: $T=\displaystyle\sum_{i=3}^k a^1_1\otimes a^2_1\otimes b^3_1\otimes\cdots\otimes b^{i-1}_1\otimes (b^i_2+\frac{2}{k-2}b^i_1)\otimes b^{i+1}_1\otimes\cdots\otimes b^k_1+a^1_2\otimes a^2_2\otimes b^3_3\otimes\cdots\otimes b^k_3$, which is discussed in \textbf{Case 3 Type 1}.

Subcase 2: $T=(a^1_2\otimes a^2_1+a^1_1\otimes a^2_2)\otimes b^3_1\otimes\cdots\otimes b^k_1+\displaystyle\sum_{i=3}^k a^1_1\otimes a^2_1\otimes b^3_1\otimes\cdots\otimes b^{i-1}_1\otimes b^i_2\otimes b^{i+1}_1\otimes\cdots\otimes b^k_1+a^1_1\otimes a^2_1\otimes b^3_3\otimes\cdots\otimes b^k_3$, which has been discussed in \textbf{Case 3 Type 1} after a change of basis.

Subcase 3: $T=(a^1_2\otimes a^2_1+a^1_1\otimes a^2_2)\otimes b^3_1\otimes\cdots\otimes b^k_3+\displaystyle\sum_{i=3}^k a^1_1\otimes a^2_1\otimes b^3_1\otimes\cdots\otimes b^{i-1}_1\otimes b^i_2\otimes b^{i+1}_1\otimes\cdots\otimes b^k_1+b^1_3\otimes b^2_3\otimes b^3_3\otimes\cdots\otimes b^k_3$, where $b^1_3$ and $a^1_1$ are independent, or $b^2_3$ and $a^2_1$ are independent. Let $c^1_1=a^1_1\otimes a^2_1$, $c^1_2=a^1_2\otimes a^2_1+a^1_1\otimes a^2_2$, $c^1_3=b^1_3\otimes b^2_3$, and $V_1$ denote the subspace of $A_1\otimes A_2$ spanned by $c^1_1$, $c^1_2$ and $c^1_3$, since $b^1_3\otimes b^2_3=(xa^1_1+ya^1_2)\otimes a^2_1$ or $b^1_3\otimes b^2_3=a^1_1\otimes (xa^2_1+ya^2_2)$, by the argument of \textbf{Case 2 Type 2}, we have $T\in \sigma_3 (X_k)$ directly except for a few subcases. From the argument of \textbf{Case 2 Type 2}, we can see it is harmless to assume $k=4$ when considering these exceptional subcases.

Exceptional Case 1: $b^3_j=a^3_j$ for $j=1, 2$, $b^4_j=a^4_1\otimes b^5_1$ for some $b^5_1\in A_5\otimes\cdots\otimes A_n$, $b^3_3=a^3_2+\lambda a^3_1$ for some $\lambda\in\mathbb{C}$, $b^4_3: A^*_4\rightarrow A_5\otimes\cdots\otimes A_n$ has rank $2$, say $b^4_3=a^4_1\otimes u^5_1+a^4_2\otimes u^5_2$, and $b^5_1$ is a linear combination of $u^5_1$ and $u^5_2$, then by redefining $a^1_2$ and $a^2_2$, we can assume $c^1_3=a^1_2\otimes a^2_1$ or $a^1_1\otimes a^2_2$, $u^5_1=b^5_1-\mu u^5_2$ for some $\mu\in\mathbb{C}$ or $u^5_2=b^5_1$. If $c^1_3=a^1_2\otimes a^2_1$, $u^5_1=b^5_1-\mu u^5_2$, then $T=a^1_1\otimes [a^2_2-(1+\lambda)a^2_1]\otimes a^3_1\otimes a^4_1\otimes b^5_1+(a^1_1+a^1_2)\otimes a^2_1\otimes [(1+\lambda)a^3_1+a^3_2]\otimes a^4_1\otimes b^5_1+a^1_2\otimes a^2_1\otimes (a^3_2+\lambda a^3_1)\otimes (a^4_2-\mu a^4_1)\otimes u^5_2$. If $c^1_3=a^1_2\otimes a^2_1$, $u^5_2=b^5_1$, $T=T'+a^1_1\otimes (a^2_2-\lambda a^2_1)\otimes a^3_1\otimes a^4_1\otimes b^5_1$, where $T'=a^1_2\otimes a^2_1\otimes a^3_1\otimes a^4_1\otimes b^5_1+a^1_1\otimes a^2_1\otimes (a^3_2+\lambda a^3_1)\otimes a^4_1\otimes b^5_1+a^1_2\otimes a^2_1\otimes (a^3_2+\lambda a^3_1)\otimes a^4_1\otimes u^5_1+a^1_2\otimes a^2_1\otimes (a^3_2+\lambda a^3_1)\otimes a^4_2\otimes b^5_1\in \widehat{T}_{a^1_2\otimes a^2_1\otimes (a^3_2+\lambda a^3_1)\otimes a^4_1\otimes b^5_1} X_4$. If $c^1_3=a^1_1\otimes a^2_2$, $u^5_1=xb^5_1+yu^5_2$ for some $0\neq x, y\in\mathbb{C}$, $T=a^1_1\otimes (a^2_1+a^2_2)\otimes [xa^3_2+(x\lambda+1)a^3_1]\otimes a^4_1\otimes b^5_1+(a^1_2-\displaystyle\frac{x\lambda+1}{x}a^1_1)\otimes a^2_1\otimes a^3_1\otimes a^4_1\otimes b^5_1+a^1_1\otimes a^2_2\otimes (a^3_2+\lambda a^3_1)\otimes (ya^4_1+a^4_2)\otimes u^5_2$. If $c^1_3=a^1_1\otimes a^2_2$, $u^5_2=b^5_1$, $T=T'+(a^1_2-\lambda a^1_1)\otimes a^2_1\otimes a^3_1\otimes a^4_1\otimes b^5_1$, where $T'=a^1_1\otimes a^2_2\otimes a^3_1\otimes a^4_1\otimes b^5_1+a^1_1\otimes a^2_1\otimes (a^3_2+\lambda a^3_1)\otimes a^4_1\otimes b^5_1+a^1_1\otimes a^2_2\otimes (a^3_2+\lambda a^3_1)\otimes a^4_1\otimes u^5_1+a^1_1\otimes a^2_2\otimes (a^3_2+\lambda a^3_1)\otimes a^4_2\otimes b^5_1\in \widehat{T}_{a^1_1\otimes a^2_2\otimes (a^3_2+\lambda a^3_1)\otimes a^4_1\otimes b^5_1} X_4$. If $x=0$, $b^4_1=b^4_2$ and $b^4_3: A^*_4\rightarrow A_5\otimes\cdots\otimes A_n$ all have rank $1$.

Exceptional Case 2: If $c^1_3=a^1_2\otimes a^2_1$, $b^4_1=b^4_3=a^4_1\otimes b^5_1$ for some $b^5_1\in A_5\otimes\cdots\otimes A_n$, $b^4_2=a^4_1\otimes u^5_1+a^4_2\otimes u^5_2$ for some $u^5_1, u^5_2\in A_5\otimes\cdots\otimes A_n$, and $u^5_1=xu^5_2+yb^5_1$ for some $x, y\in\mathbb{C}$, then $T=a^1_1\otimes [(y-\lambda-1)a^2_1+a^2_2]\otimes a^3_1\otimes a^4_1\otimes b^5_1+(a^1_1+a^1_2)\otimes a^2_1\otimes [a^3_2+(\lambda+1)a^3_1]\otimes a^4_1\otimes b^5_1+a^1_1\otimes a^2_1\otimes a^3_1\otimes [xa^4_1+a^4_2]\otimes u^5_2$. If $c^1_3=a^1_1\otimes a^2_2$, then $T=a^1_1\otimes (a^2_1+a^2_2)\otimes [a^3_2+(\lambda+1)a^3_1]\otimes a^4_1\otimes b^5_1+[a^1_2+(y-\lambda-1)a^1_1]\otimes a^2_1\otimes a^3_1\otimes a^4_1\otimes b^5_1+a^1_1\otimes a^2_1\otimes a^3_1\otimes (xa^4_1+a^4_2)\otimes u^5_2$.

%\vspace*{3 mm}

\textbf{Subtype 3}: $T=\displaystyle\sum_{i<j} b^1_1\otimes\cdots\otimes b^{i-1}_1\otimes b^i_2\otimes b^{i+1}_1\otimes \cdots\otimes b^{j-1}_1\otimes b^j_2\otimes b^{j+1}_1\otimes\cdots\otimes b^k_1+\displaystyle\sum_{i=1}^k b^1_1\otimes\cdots\otimes b^{i-1}_1\otimes b^i_3\otimes b^{i+1}_1\otimes \cdots\otimes b^k_1$. If $b^1_2=b^1_1$, $b^2_2=b^2_1$ up to a scalar, then we can assume $b^1_2=b^1_1=a^1_1$, $b^2_2=b^2_1=a^2_2$, $b^1_3=a^1_2$, and $b^2_3=a^2_1$. This has been discussed in \textbf{Case 3 Type 1}. Otherwise, Let $c^1_1=b^1_1\otimes b^2_1$, $c^1_2=b^1_2\otimes b^2_1+b^1_1\otimes b^2_2$, and $c^1_3=b^1_2\otimes b^2_2+b^1_3\otimes b^2_1+b^1_1\otimes b^2_3$. By the argument of \textbf{Case 2 Type 3}, we can see $T\in\sigma_3 (X_k)$ except only one subcase, and by the argument of \textbf{Case 2 Type 3} it is harmless to assume $k=4$ for the exceptional subcase, $b^3_j=a^3_j$ for $j=1, 2$, $b^3_3=xa^3_1+ya^3_2$ for some $x, y\in\mathbb{C}$, $b^4_1=b^4_2=a^4_1\otimes b^5_1$ for some $b^5_1\in A_5\otimes \cdots\otimes A_n$, $b^4_3: A^*_4\rightarrow A_5\otimes \cdots\otimes A_n$ has rank $2$, say $b^4_3=a^4_1\otimes u^5_1+a^4_2\otimes u^5_2$ for some $u^5_1, u^5_2\in A_5\otimes \cdots\otimes A_n$, $u^5_2$ and $b^5_1$ are linearly independent, and $u^5_1=\lambda u^5_2+\mu b^5_1$ for some $\lambda, \mu\in\mathbb{C}$. So $T=[(x+\mu-5)c^1_1+2c^1_2+c^1_3]\otimes a^3_1\otimes a^4_1\otimes b^5_1+[(y+2)c^1_1+c^1_2]\otimes a^3_2\otimes a^4_1\otimes b^5_1+c^1_1\otimes a^3_1\otimes (\lambda a^4_1+a^4_2)\otimes u^5_2$, which has been discussed in \textbf{Case 3 Type 1}.

%\vspace*{3 mm}

\textbf{Subtype 4}: $T=\displaystyle\sum_{i=2}^k b^1_2\otimes b^2_1\otimes\cdots\otimes b^{i-1}_1\otimes b^i_2\otimes b^{i+1}_1\otimes\cdots\otimes b^k_1+\sum_{i=1}^k b^1_1\otimes\cdots\otimes b^{i-1}_1\otimes b^i_3\otimes b^{i+1}_1\otimes\cdots\otimes b^k_1$. If $b^1_2=b^1_1$ up to a scalar, $T$ has been discussed in \textbf{Case 3 Type 1}. Otherwise, let $c^1_1=b^1_1\otimes b^2_1$, $c^1_2=b^1_2\otimes b^2_1$, $c^1_3=b^1_2\otimes b^2_2+b^1_3\otimes b^2_1$. From the argument of \textbf{Case 2 Type 4}, we can see $T\in\sigma_3 (X_k)$.

\subsection{Case 4: $T\in\sigma_2(X_2)$}

We assume $T \in \sigma_2(X_{k-1})$, and show $T\in\sigma_3 (X_k)$ by checking each type of the normal forms in Proposition~\ref{prop:norm}.

\textbf{Type 1}: $T=b^1_1\otimes\cdots\otimes b^k_1$. Then $T=b^1_1\otimes\cdots\otimes b^{k-1}_1\otimes a^k_1\otimes b^k_1(\alpha^k_1)+b^1_1\otimes\cdots\otimes b^{k-1}_1\otimes a^k_2\otimes b^k_1(\alpha^k_2)+b^1_1\otimes\cdots\otimes b^{k-1}_1\otimes a^k_3\otimes b^k_1(\alpha^k_3)$.

\textbf{Type 2}: $T=b^1_1\otimes \cdots\otimes b^k_1+b^1_2\otimes \cdots\otimes b^k_2$. Since there is some $1\leq i\leq k-1$ such that $b^i_1$ and $b^i_2$ are linearly independent, then $\dim T(A^*_i\otimes A^*_k)\leq 3$ implies at least one of $b^k_1$ and $b^k_2: A^*_k\rightarrow A_{k+1}\otimes\cdots\otimes A_n$ has rank $1$, and the other one has rank at most $2$, say $b^k_1=a^k_1\otimes b^{k+1}_1$ and $b^k_2=a^k_1\otimes b^{k+1}_2+a^k_2\otimes b^{k+1}_3$ for some $b^{k+1}_1$, $b^{k+1}_2$, $b^{k+1}_3\in A_{k+1}\otimes\cdots\otimes A_n$. Hence, $T=b^1_1\otimes \cdots\otimes b^{k-1}_1\otimes a^k_1\otimes b^{k+1}_1+b^1_2\otimes \cdots\otimes b^{k-1}_2\otimes a^k_1\otimes b^{k+1}_2+b^1_2\otimes\cdots\otimes b^{k-1}_2\otimes a^k_2\otimes b^{k+1}_3$.

\textbf{Type 3}: $T=\displaystyle\sum_{i=1}^k b^1_1\otimes \cdots\otimes b^{i-1}_1\otimes b^i_2\otimes b^{k+1}_1\otimes\cdots\otimes b^k_1$. Without loss of generality, we can assume $b^1_1$ and $b^1_2$ are linearly independent, and $b^2_1$ and $b^2_2$ are linearly independent, then $\dim T(A^*_1\otimes A^*_k)\leq 3$ implies $b^k_1: A^*_k\rightarrow A_{k+1}\otimes \cdots\otimes A_n$ has rank $1$, say $b^k_1=a^k_1\otimes b^{k+1}_1$ for some $b^{k+1}_1\in A_{k+1}\otimes\cdots\otimes A_n$, and $\{b^{k+1}_1$, $b^k_2(\alpha^k_2)$, $b^k_2(\alpha^k_3)\}$ spans an at most $2$ dimensional subspace. Thus we can assume $b^k_2(\alpha^k_3)=xb^{k+1}_1+yb^k_2(\alpha^k_2)$ for some $x$, $y\in\mathbb{C}$, then $T=T'+a^1_1\otimes \cdots\otimes b^{k-1}_1\otimes (a^k_2+ya^k_3)\otimes b^k_2(\alpha^k_2)$, where $T'=\displaystyle\sum_{i=1}^{k-2} b^1_1\otimes \cdots\otimes b^{i-1}_1\otimes b^i_2\otimes b^{i+1}_1\otimes \cdots\otimes b^{k-1}_1\otimes a^k_1\otimes b^{k+1}_1+b^1_1\otimes \cdots\otimes b^{k-1}_1\otimes a^k_1\otimes b^k_2(\alpha^k_1)+a^1_1\otimes\cdots\otimes b^{k-1}_1\otimes xa^k_3\otimes b^{k+1}_1\in\widehat{T}_{b^1_1\otimes \cdots\otimes b^{k-1}_1\otimes a^k_1\otimes b^{k+1}_1} X_k$.

\section{Acknowledgement}

I wish to thank J. M. Landsberg for bringing the question to my attention and for many fruitful discussions during the course
of this work. I am grateful to Seth Sullivant and Luke Oeding for helpful conversations.

\bibliographystyle{amsplain}

\end{document}